\theoremstyle{plain}% default
\newtheorem*{theorem*}{Theorem}
\newtheorem*{remark*}{Remark}
\newtheorem*{example*}{Example}
\newtheorem{lemma}{Lemma}[subsection]
\newtheorem{proposition}[lemma]{Proposition}
\newtheorem{corollary}[lemma]{Corollary}
\newtheorem{theorem}[lemma]{Theorem}
\newtheorem*{conjecture*}{Conjecture}
\newtheorem{introtheorem}{Theorem}
\theoremstyle{definition}
\newtheorem{definition}[lemma]{Definition}
\newtheorem{example}[lemma]{Example}
\theoremstyle{remark}
\newtheorem{remark}[lemma]{Remark}
\newtheorem{notation}[lemma]{Notation}
\newcommand{\Hom}{\operatorname{Hom}}
\newcommand{\triv}{{\mathbbm 1}}
\newcommand{\id}{\operatorname{Id}}
\newcommand{\Ker}{\operatorname{Ker}}
\newcommand{\F}{{\mathcal F}}
\newcommand{\rk}{{\operatorname{rk}}}
\newcommand{\sdim}{{\operatorname{sdim}}}
\newcommand{\C}{{\mathbb C}}
\newcommand{\Z}{{\mathbb Z}}
\newcommand{\eps}{{\varepsilon}}
\newcommand{\lam}{{\lambda}}
\newcommand{\gl}{\mathfrak{gl}}
\newcommand{\abs}[1]{\left|{#1}\right|}
\newcommand{\p}{\mathfrak{p}}
\newcommand{\Inna}[1]{\begin{framed} {\tt{\color{blue}{#1}}} \end{framed}} 
\newcommand{\VeraA}[1]{{#1}}
\newcommand{\comment}[1]{}
\def\quotient#1#2{%
    \raise1ex\hbox{$#1$}\Big/\lower1ex\hbox{$#2$}%
}
\begin{document}

\date{\today}
\title{Kac-Wakimoto conjecture for the periplectic Lie superalgebra}
 \author{Inna Entova-Aizenbud, Vera Serganova}
\address{Inna Entova-Aizenbud, Dept. of Mathematics, Ben Gurion University,
Beer-Sheva,
Israel.}
\email{entova@bgu.ac.il}
\address{Vera Serganova, Dept. of Mathematics, University of California at
Berkeley,
Berkeley, CA 94720.}
\email{serganov@math.berkeley.edu}
% 
% \begin{abstract}
%  
% \end{abstract}
\begin{abstract}
 We prove the Kac-Wakimoto conjecture for the periplectic Lie superalgebra $\p(n)$, stating that any simple module lying in a block of non-maximal atypicality has superdimension zero.
\end{abstract}

\keywords{}
\maketitle
\setcounter{tocdepth}{3}
%\tableofcontents

\section{Introduction}
\subsection{}
Consider a complex vector superspace $V$, and let $\C^{0|1}$ be the odd one-dimensional vectors superspace. 

The (complex) periplectic Lie superalgebra $\p(V)$ is the Lie superalgebra of 
endomorphisms of a complex vector superspace $V$ possessing a non-degenerate 
symmetric form $\omega: \VeraA{S^2 V} \to \C^{0|1}$ (this form is also referred to as an 
``odd form''). An example of such superalgebra is $\p(n)= \p(\C^{n|n})$ for $V = \C^{n|n}$, where 
$\omega_n: \C^{n|n} \otimes \C^{n|n} \to \C^{0|1}$ pairing the even and odd 
parts of the vector superspace $\C^{n|n}$.

The periplectic Lie superalgebras 
\VeraA{has} an interesting non-semisimple representation theory; some results 
on the category $\F_n$ of finite-dimensional integrable representations 
of $\p(n)$ can be found in \cite{BDE:periplectic, Chen:periplectic, Coulembier:Brauer, DLZ:fft_ortho, Gor:center, Moon, Ser:periplectic}.

In \cite{BDE:periplectic}, the blocks of the category $\F_n$ were classified: it was shown that (up to change of pairity) the blocks can be numbered by integers $-n, -n+2, -n+4, \ldots, n-4, n-2, n$, with the trivial representation sitting in block number $n (\mathrm{mod} 2)$. We denote this blocks by $\F^k_n$, $k\in \{-n, -n+2, \ldots, n-2, n\}$.

In this article, we prove the following version of the Kac-Wakimoto conjecture:

\begin{introtheorem}\label{introthrm:KW}
 We have $\sdim M =0$ if $M \in \F^k_n$, $ k\neq 0, \pm 1$, where $\sdim$ denotes the superdimension ($\sdim M = \dim M_{\bar{0}} - \dim M_{\bar{1}}$).
\end{introtheorem}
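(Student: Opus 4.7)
The plan is to apply the Duflo--Serganova (DS) functor $DS_x$ for odd self-commuting elements $x\in \mathfrak{p}(n)_{\bar 1}$ (i.e.\ $[x,x]=0$), and to argue by induction on $n$. Recall that for any finite-dimensional $\mathfrak{p}(n)$-module $M$ one sets $DS_x(M):=\ker x/\operatorname{im} x$, obtaining a module for a certain subquotient Lie superalgebra of $\mathfrak{p}(n)$. The construction satisfies the fundamental identity $\sdim M = \sdim DS_x(M)$, so to prove the theorem it suffices, for each simple $L\in \mathcal{F}^k_n$ with $k\neq 0,\pm 1$, to exhibit an $x$ for which $\sdim DS_x(L) = 0$.

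I would first analyze the cone of odd self-commuting elements of $\mathfrak{p}(n)$ and identify the superalgebra acting on $DS_x(M)$. Using the embedding $\mathfrak{p}(n) \subset \mathfrak{gl}(n|n)$ and the odd form $\omega$, for $x$ of rank $r$ the relevant subquotient should itself be periplectic, namely $\mathfrak{p}(n-r)$, so that $DS_x$ gives a functor $\mathcal{F}_n\to \mathcal{F}_{n-r}$. Exactness of $DS_x$ and compatibility with duals and tensor products are standard and would be recorded along the way.

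The crux of the argument is to show that $DS_x$ respects blocks in a controlled fashion: specifically, for $x$ of rank $1$, that $DS_x(\mathcal{F}^k_n)\subseteq \mathcal{F}^k_{n-1}$, so the label $k$ is preserved. This is the $\mathfrak{p}(n)$-analogue of central-character preservation in the $\mathfrak{gl}(m|n)$ setting, and should follow from the block description of \cite{BDE:periplectic} together with compatibility of $DS$ with translation-type endofunctors. Granted this, the induction closes: for a simple $L\in \mathcal{F}^k_n$ with $|k|\geq 2$, every Jordan--H\"older constituent of $DS_x(L)$ lies in $\mathcal{F}^k_{n-1}$, which (as long as $n-1\geq|k|$) is still a non-central block, and hence is killed by the inductive hypothesis; thus $\sdim L=\sdim DS_x(L)=0$. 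The base case $n=|k|$, where the block $\mathcal{F}^{\pm n}_n$ is ``extremal'' and there is no room for further DS-reduction, must be handled separately by direct computation of the characters and superdimensions of the finitely many simples it contains.

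The main obstacle is precisely the block-preservation step, because $\mathfrak{p}(n)$ has no interesting center and $k$ cannot be read off a central character; a substitute invariant of blocks---for example the eigenvalue of a translation-type endofunctor on $\mathcal{F}_n$, or a $\Z$-grading naturally compatible with $DS_x$---must be constructed and shown to intertwine under DS. Establishing this compatibility is the heart of the argument; once in place, the inductive argument and the handling of the extremal base case should be comparatively routine.
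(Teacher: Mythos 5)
Your overall strategy is the paper's own: use $DS_x$ together with block preservation and induction. But there is a concrete gap in the key step, and the base case is mischaracterized.

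The parity of the block label is a hard obstruction to your rank-$1$ choice. The blocks of $\F_n$ are labeled by $\kappa(\lambda)=\sum_{i=1}^n(-1)^{\bar\lambda_i}$, a sum of $n$ signs, so every block label $k$ of $\F_n$ satisfies $k\equiv n \pmod 2$, whereas every block label of $\F_{n-1}$ has the opposite parity. Consequently the statement ``$DS_x(\F^k_n)\subseteq \F^k_{n-1}$'' cannot hold for rank-$1$ $x$: there is no block $\F^k_{n-1}$ at all. (Note also that rank-$1$ self-commuting odd elements necessarily live in $\p(n)_1 \cong \Pi S^2\C^n$ rather than $\p(n)_{-1}\cong\Pi\wedge^2(\C^n)^*$, since antisymmetric matrices have even rank; this matters for the extremal-block step, see below.) The paper sidesteps this by taking $x\in\p(n)_{-1}$ of rank $2$, so that $DS_x\colon \F_n\to\F_{n-2}$ and block labels on both sides have the same parity, making $DS_x(\F^k_n)\subseteq\F^k_{n-2}$ at least a sensible claim. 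Your induction would then have two base cases, $n=1$ and $n=2$.

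Second, the ``direct computation on finitely many simples'' for the extremal block $\F^{\pm n}_n$ does not apply: these blocks contain infinitely many simple objects. What makes this case tractable is structural, not combinatorial: every simple in a typical block $\F^{\pm n}_n$ is costandard (a ``thin Kac module''), hence a free $U(\p(n)_{-1})$-module, hence every object of $\F^{\pm n}_n$ is free over $U(\p(n)_{-1})$; in particular $\sdim=0$, and, crucially for the inductive step, $DS_x$ (for $x\in\p(n)_{-1}$) annihilates it. This last point is exactly why the paper insists on $x\in\p(n)_{-1}$ rather than in the full odd part.

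Finally, you correctly identify that the heart of the argument is block preservation and that it should involve translation functors, but you leave that step entirely unproved. The paper's mechanism is worth recording: one first shows that $DS_x$ intertwines the tensor Casimir $\Omega^{(n)}$ with $\Omega^{(n-2)}$ (because $DS_x$ is an SM functor and $DS_x(V_n)\cong V_{n-2}$), hence commutes with the translation functors $\Theta^{(n)}_k$. One then shows any simple $L$ can be translated, by a suitable word $\Theta_I$, into a typical block $\F^{\pm n}_n$ with nonzero image (this uses the action of translation functors on projectives from \cite{BDE:periplectic} plus the adjunction $\Theta_i\vdash\Theta_{i-1}$). Combining $DS_x\Theta_I\cong\Theta_IDS_x$, Corollary \ref{old_cor:blocks_action}, and the vanishing of $DS_x$ on the extremal blocks, one derives a contradiction if the block index were to drop or rise. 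Without a concrete substitute for this argument, the proposal as written does not close.
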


The main ingredients in the proof of this theorem, are the translation functors acting on $\F_n$, and the Duflo-Serganova functor $DS:\F_n \to \F_{n-2}$. The translation functors are direct summands of the functor $-\otimes V$, whose action on the blocks $\F_n$ was obtained in \cite{BDE:periplectic}; the functor $DS:\F_n \to \F_{n-2}$ is a tensor functor preserving dimension, which allows us to reduce the problem of computing dimensions in $\F_n$ to a similar problem in $\F_{n-2}$.

We also prove the following statement:

\begin{introtheorem}\label{introthrm:blocks}
 Let $M \in \F_n$ be an object lying in a certain block as described in Section \ref{sssec:blocks} and \cite{BDE:periplectic}. 
 
 \begin{enumerate}
  \item The object $M^*$ also lies in the same block of $\F_n$.
  \item We have a natural isomorphism $$\Theta_k(M^*) \cong \Pi \left(\Theta_{-k}(M)\right)^*$$ where $\Theta_k$ is the $k$-th translation functor on $\F_n$ (see Definition \ref{def:transl_functors}). 
 \end{enumerate}

\end{introtheorem}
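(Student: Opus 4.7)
The plan is to first prove (1) and then deduce (2) from the self-duality $V^{*}\cong \Pi V$ of the defining representation. For (1), the block of an object in $\F_n$ is determined by the blocks of its simple composition factors, so it suffices to treat a simple $L(\lambda)$. The blocks in $\F_n$ are classified in \cite{BDE:periplectic} by an explicit arithmetic invariant of the highest weight; on the other hand, $L(\lambda)^{*}$ is again simple, and its highest weight is obtained from $\lambda$ by a standard weight-duality (of the form $\lambda\mapsto -w_{0}\lambda$ up to a parity shift). A direct combinatorial check shows that the block invariant of \cite{BDE:periplectic} is stable under this involution, giving (1). The fact that the label set $\{-n,-n+2,\ldots,n\}$ is symmetric about $0$ is consistent with, but does not by itself imply, this invariance; the check really uses the explicit description of the invariant.

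For (2), the odd nondegenerate symmetric form $\omega:S^{2}V\to\C^{0|1}$ provides an even isomorphism $V^{*}\cong\Pi V$ of $\p(n)$-modules. Combining this with the natural isomorphism $(M\otimes V)^{*}\cong V^{*}\otimes M^{*}$ and the super-braiding $V\otimes M^{*}\cong M^{*}\otimes V$ yields a natural isomorphism of endofunctors of $\F_n$:
\[
(-\otimes V)^{*}\;\cong\;\Pi\bigl((-)^{*}\otimes V\bigr).
\]
In words, tensoring with $V$ commutes with duality up to a parity shift. Now decompose each side according to the translation functors $\Theta_k$ of Definition~\ref{def:transl_functors}. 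Using (1), duality preserves block labels, so the summand indexed by $k$ on the left must correspond to some summand on the right; tracking how the natural iso above permutes the summand decomposition of $-\otimes V$ shows that this partner is exactly $\Pi\,\Theta_{-k}(M)^{*}$, giving the desired formula.

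I expect the main obstacle to be Part (1): one must identify $L(\lambda)^{*}$ explicitly at the level of weights for $\p(n)$ (where duality is subtle because the invariant form is odd) and verify that the block invariant of \cite{BDE:periplectic} is indeed preserved by the corresponding involution. Once (1) is in hand, Part (2) is essentially formal; the only remaining bookkeeping is to verify that the reflection $k\mapsto -k$ in the indexing of the $\Theta_k$'s is the symmetry induced on the summand decomposition of $-\otimes V$ by the self-duality $V^{*}\cong\Pi V$.
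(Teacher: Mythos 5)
Your overall plan is sound -- reduce (1) to simples and use an involution on weight combinatorics, then derive (2) from $V^{*}\cong\Pi V$ -- but both parts leave a genuine gap at precisely the point the paper does the real work.

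For (1): you reduce to $L(\lambda)$ and assert that the highest weight of $L(\lambda)^{*}$ is ``obtained from $\lambda$ by a standard weight-duality of the form $\lambda\mapsto -w_{0}\lambda$ up to a parity shift.'' For $\mathfrak{p}(n)$ this is not a standard fact: the lowest weight of a simple module is \emph{not} in general $w_{0}(\lambda)$, because the Weyl group only controls the even part and the odd roots (all of the form $\pm\eps_i\pm\eps_j$) distort the support of $L(\lambda)$. You flag this as ``the main obstacle'' but give no resolution. The paper avoids it entirely: instead of dualizing the simple, it passes to the indecomposable costandard module $\nabla(\lambda)$ (which lies in the same block since $L(\lambda)$ is its socle), invokes \cite[Lemma 3.6.1]{BDE:periplectic} to identify $\nabla(\lambda)^{*}\cong\nabla(\mu)$ with $\mu+\rho=-w_{0}(\lambda+\rho)$, and then observes that the weight diagram $d_{\mu}$ is the reflection of $d_{\lambda}$ about $0$, so the block invariant $\kappa(\lambda)=\sum_i(-1)^{\bar\lambda_i}$ is unchanged. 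The pass through costandard modules is exactly what replaces your unjustified claim about $L(\lambda)^{*}$.

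For (2): you build the natural isomorphism $(-\otimes V)^{*}\cong\Pi\bigl((-)^{*}\otimes V\bigr)$ correctly, but the step where you conclude that the summand $\Theta_{k}$ on one side matches $\Theta_{-k}$ on the other is where the proof actually lies, and your argument does not supply it. Block labels cannot distinguish $\Theta_{k}$ from $\Theta_{-k}$: by Theorem~\ref{old_thrm:blocks_action} the block shift depends only on the parity of $k$, and $k$ and $-k$ have the same parity, so both functors move every block in the same direction. Calling the remaining step ``bookkeeping'' undersells it. The decomposition $-\otimes V=\bigoplus_{k}\Theta'_{k}$ is by generalized eigenspaces of the tensor Casimir $\Omega$, so what you must actually prove is that your natural isomorphism conjugates $\Omega_{M}^{*}$ into $-\Pi\Omega_{M^{*}}$. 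The minus sign comes from a concrete computation (done in the paper): the isomorphism $\eta:V_n^{*}\to V_n\otimes\Pi\C$ induced by the odd form intertwines the action of each basis element $X^{i}\in\p(n)^{*}\subset\gl(V_n)$ with $-X^{i}$, and one then tracks super-signs through $\sigma$ and the dual action to arrive at $\phi'=-\Pi\Omega_{M^{*}}$. Without this computation the sign in $k\mapsto -k$ is not established; with it, the eigenvalue-$k$ subspace of $\Omega_{M}$ corresponds to the eigenvalue-$(-k)$ subspace of $\Omega_{M^{*}}$, which is exactly the statement.

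In short: the outline matches the paper's, but you need (i) the detour through $\nabla(\lambda)$ and the explicit duality formula for costandards in place of the unproven weight-duality for simples, and (ii) the explicit Casimir computation $\Omega_{M}^{*}\leftrightarrow -\Pi\Omega_{M^{*}}$ in place of the block-label heuristic.
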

\subsection{Acknowledgements}
I.E.-A. was supported by the ISF grant no. 711/18. V.S. was supported by NSF grant 1701532.
\section{Preliminaries}\label{sec:prelim}
\subsection{General}

Throughout this paper, we will work over the base field $\mathbb C$, and all 
the categories considered will be $\C$-linear.

A {\it 
vector superspace} will be defined as a $\mathbb{Z}/2\mathbb{Z}$-graded vector 
space $V=V_{\bar 0}\oplus V_{\bar 1}$. The {\it parity} of a homogeneous vector 
$v \in V$ will be denoted by $p(v) \in \mathbb{Z}/2\mathbb{Z}=\{\bar 0, \bar 
1\}$ (whenever the notation $p(v)$ appears in formulas, we always assume that 
$v$ is homogeneous).

\subsection{Tensor categories}\label{ssec:notn:SM}
In the context of symmetric monoidal (SM) categories, we will denote by $\triv$ 
the unit object, and by $\sigma$ the symmetry morphisms.

A functor between symmetric monoidal categories will be called a {\it SM 
functor} 
if it respects the SM structure.

Given an object $V$ in a SM category, we will denote by $$coev: \triv \to V 
\otimes V^*, \;\; ev: V^* \otimes V \to \triv$$ the coevaluation and evaluation 
maps for $V$. We will also denote by $\gl(V):= V \otimes V^*$ the internal 
endomorphism space with the obvious Lie algebra structure on it. The object $V$ 
is then a module over the Lie algebra $\gl(V)$; we denote the action by $act: 
\gl(V) \otimes V \to V$.
\VeraA{ For two functors $F,G$ we write $F\vdash G$ if $F$ is left adjoint of $G$.}
\subsection{The periplectic Lie superalgebra}\label{ssec:notn:periplectic}
\subsubsection{Definition of \VeraA{the} periplectic Lie superalgebra}\label{sssec:def_periplectic}
Let $n \in \Z_{>0}$, and let $V_n$ be an $(n|n)$-dimensional vector superspace 
equipped with a non-degenerate odd symmetric form
\begin{eqnarray}
\label{beta}
\beta:V_n\otimes V_n\to\mathbb C,\quad \beta(v,w)=\beta(w,v), 
\quad\text{and}\quad 
\beta(v,w)=0\,\,\text{if}\,\,p(v)=p(w).
\end{eqnarray}

Then $\operatorname{End}_{\mathbb C}(V_n)$ inherits the structure of a vector 
superspace from $V_n$. We denote by $\mathfrak{p}(n)$ the Lie superalgebra of 
all 
$X\in\operatorname{End}_{\mathbb C}(V_n)$ preserving $\beta$, i.e. satisfying  
$$\beta(Xv,w)+(-1)^{p({X})p(v)}\beta(v,Xw)=0.$$

\begin{remark}\label{rmk:basis}
Choosing dual bases $v_1, v_2, \ldots, v_n$ in $V_{\bar{0}, n}$ and $v_{1'}, 
v_{2'},\ldots v_{n'}$ in $V_{\bar{1}, n}$, we can write the matrix of $X\in 
\mathfrak{p}(n)$ as
$\left(\begin{smallmatrix}A&B\\C&-A^t\end{smallmatrix}\right)$
where $A,B,C$ are $n\times n$ matrices such that $B^t=B,\, C^t=-C$.
\end{remark}

We will also use the triangular decomposition 
${\p(n)} \cong 
{\p(n)}_{-1} \oplus {\p(n)}_0 \oplus {\p(n)}_1$ where $${\p(n)}_0 \cong \gl(n), \;\; 
{\p(n)}_{-1} 
\cong \Pi\wedge^2(\C^n)^*, \;\; {\p(n)}_1 \cong \Pi S^2\C^{n}.$$
Then the action of $\p(n)_{\pm 1}$ on any $\p(n)$-module is ${\p(n)}_0$-equivariant.

\subsubsection{Weights for the periplectic 
superalgebra}\label{sssec:notn:weight_p_n}
The integral weight lattice for $\mathfrak{p}(n)$ will be $span_{\mathbb 
Z}\{\eps_i\}_{i=1}^n$.

\begin{itemize}[label={$\star$}]
 \item We fix a set of simple roots $\eps_2 - \eps_{1}, 
\ldots, \eps_n-\eps_{n-1},\VeraA{-\eps_{n-1}-\eps_n}$, the \VeraA{last} root is odd and all others are even.

Hence the dominant integral weights will be given by $\lambda = \sum_i 
\lambda_i \eps_i$, where $\lambda_1 \leq \lambda_2 \leq \ldots\leq\lambda_n$.
\item We fix an order on the weights of $\p(n)$: for weights $\mu, \lambda$, we 
say that $\mu \geq \lambda$ if $\mu_i \leq \lambda_i$ for each $i$.

\begin{remark}
 It was shown in \cite[Section 3.3]{BDE:periplectic} that if $\leq$ corresponds to a 
highest-weight structure on the category of finite-dimensional representations 
of $\p(n)$.\VeraA{ Note that in the cited paper we use slightly different set of simple roots
 $-\eps_1-\eps_2,\eps_1 - \eps_{2}, \ldots, \eps_{n-1}-\eps_{n}$.} 
\end{remark}

\item
The simple finite-dimensional representation of $\p(n)$ corresponding to the 
weight $\lambda$ whose highest weight vector is {\it even} will be denoted by 
$L_n(\lambda)$.
\begin{example}
 Let $n\geq 2$. The natural representation $V_n$ of $\p(n)$ has highest weight $-\eps_1$, 
with odd highest-weight vector; hence $V_n \cong \Pi L_n(-\eps_1)$. The 
representation $\bigwedge^2 V_n$ has highest weight $-2\eps_1$, and the 
representation $S^2 V_n$ has highest weight $-\eps_1 - \eps_2$; both have even 
highest weight vectors, so $$\wedge^2 V_n \twoheadrightarrow L_n(-2\eps_1), \;\; L_n(-\eps_1 - \eps_2) \hookrightarrow S^2 V_n .$$
\end{example}
\item Set $\rho^{(n)} = 
\sum_{i=1}^n (i-1)\eps_i$, and for any weight 
$\lambda$, denote $$\bar{\lambda}  = \lambda+\rho^{(n)} .$$

\item We will associate to $\lambda$ a weight diagram $d_{\lambda}$, defined as 
a labeling of the integer line by symbols $\bullet$ (``black ball'') and 
$\circ$ 
(``empty'') such that such that $j$ has label $\bullet$ if $j \in 
\{\bar{\lambda_i}\,|\,i=1, 2, \ldots \}$, and label 
$\circ$ otherwise.

\item We denote $\abs{\lambda} := - \sum_i {\lambda_i}$ and $\kappa(\lambda) = \sum_{i} (-1)^{\bar{\lambda}_i}$.
 
\end{itemize}

\subsubsection{Representations of 
\texorpdfstring{$\p(n)$}{p(n)}}\label{sssec:notn:repr_periplectic}

We denote by $\F_n$ the category of 
finite-dimensional representations of
$\p(n)$ whose restriction to $\p(n)_{\bar 0} \cong 
\mathfrak{gl}(n)$ integrates to an action of $GL(n)$.

By definition, the morphisms in $\F_n$ will be {\it 
grading-preserving} 
$\p(n)$-morphisms,
i.e., $\operatorname{Hom}_{\F_n}(X,Y)$ is a vector space and not a 
vector superspace. This is important in order to ensure that the category
$\F_n$ be abelian.

The category $\F_n$ is not semisimple. In fact, this 
category is a highest-weight category, 
having simple, standard, costandard, and projective modules (these are also 
injective and tilting, per \cite{BoeKujawaNakano:complexity}). Given a simple module $L_n(\lambda)$ in 
$\F_n$, we denote the corresponding standard, costandard, and projective 
modules by $\Delta_n(\lambda)$, $\nabla_n(\lambda)$, $P_n(\lambda)$ 
respectively.

\subsubsection{Tensor Casimir and translation functors}

Consider the following natural endomorphism ${\Omega}^{(n)}$ of the endofunctor $(-) 
\otimes V_n$ on $\mathcal{F}_n$.

\VeraA{ Note that $\p_n$ is the set of fixed points of the involutive automorphism $\sigma$ of $\gl(V_n)$.
We consider the $\p_n$-equivariant decomposition:
$$\gl(V_n) \cong\p_n \oplus \p_n^*$$
where $\p_n^*$ is the eigenspace of $\sigma$ with eigenvalue $-1$.
Both $\p_n$ and $\p_n^*$ are maximal isotropic subspaces with respect to the invariant symmetric form on $\gl(V_n)$ and hence this form defines
a non-degenerated pairing $\p_n^*\otimes\p_n\to\C$.}

We begin by taking the orthogonal $\p_n$-equivariant decomposition
$$\gl(V_n) \cong\p_n \oplus \p_n^*$$ with respect to 
the form $$tr:=ev\circ \sigma_{V_n, V_n^*}: \gl(V_n) \cong V_n \otimes V_n^* 
\to 
\C.$$

\begin{definition}[Tensor Casimir]\label{def:Casimir}
For any $M \in \mathcal{F}_n$, let ${\Omega}_M$ be the composition
$$ V_n \otimes M \xrightarrow{\id \otimes coev \otimes \id} V_n \otimes \p(n)^* 
\otimes \p(n) \otimes M \xrightarrow{i_* \otimes \id}  V_n \otimes \gl(V_n) 
\otimes \p(n) \otimes M   \xrightarrow{act \otimes act} 
V_n \otimes M $$ where $i_*: \p(n)^* \to \gl(V_n)$ is the $\p(n)$-equivariant embedding defined above.
\end{definition}
\begin{definition}[Translation functors]\label{def:transl_functors}
 For $k \in \C$, we define a functor 
${\Theta'}^{(n)}_k:\mathcal{F}_n\to\mathcal{F}_n$ as the functor $\Theta^{(n)}= ( - )\otimes V_n$ 
followed by the projection onto the generalized $k$-eigenspace for 
${\Omega}^{(n)}$, i.e. 
\begin{eqnarray}
\label{thetak}
{\Theta'}^{(n)}_k(M):=\bigcup_{m>0}\Ker ({\Omega}^{(n)} 
-k\operatorname{Id})^m_{|_{M\otimes V_n}}
\end{eqnarray}
and set ${\Theta}^{(n)}_k:=\Pi^k{\Theta'}^{(n)}_k$ in case $k\in\mathbb{Z}$ (it was proved in \cite{BDE:periplectic} that $ \forall k \notin \Z, \; {\Theta}^{(n)}_k \cong 0$). 
\end{definition}

We use the following results from \cite{BDE:periplectic} throughout the paper:

\begin{theorem}[See \cite{BDE:periplectic}.]\label{old_thrm:transl_func}
The relations on the translation functors $\Theta_j$, $j \in \Z$ induce a 
representation of the infinite Temperley-Lieb algebra $TL_{\infty}(q=i)$ on the 
Grothendieck ring on $\F_n$. Furthermore, for any $k \in \Z$, ${\Theta}^{(n)}_k \vdash {\Theta}^{(n)}_{k-1}$.
\end{theorem}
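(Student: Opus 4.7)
The plan is to identify the combinatorial action of $\Theta^{(n)}_k$ on the Grothendieck ring $K_0(\F_n)$ as a "ball move" on weight diagrams and then verify the Temperley–Lieb relations and adjunction from this description. First, I would analyze the action of $\Omega^{(n)}$ on $L_n(\lambda)\otimes V_n$: using the triangular decomposition $\p(n)=\p(n)_{-1}\oplus\gl(n)\oplus\p(n)_1$, the isotropic splitting $\gl(V_n)\cong\p_n\oplus\p_n^*$, and dual bases for the induced pairing, $\Omega^{(n)}$ can be written out explicitly on weight vectors. Combined with the usual highest-weight calculation for $L_n(\lambda)\otimes V_n$, this would show that the tensor product admits a filtration whose sections are (parity shifts of) $L_n(\lambda-\eps_i)$, with the $i$-th section lying in the generalized eigenspace for $\Omega^{(n)}$ with eigenvalue $\bar{\lambda}_i-1$. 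Consequently, on $K_0(\F_n)$, the functor $\Theta^{(n)}_k$ acts by removing a black ball at position $k$ of the diagram $d_\lambda$ and placing one at position $k-1$, and vanishes if no such move is possible.

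Granted this combinatorial description, the Temperley–Lieb relations at $q=i$, where $\delta=-(q+q^{-1})=0$, can be read off directly. The relation $\Theta^{(n)}_j\Theta^{(n)}_j\cong 0$ holds because after one application position $j$ is empty and position $j-1$ is occupied, so a second $j\to j-1$ move is impossible. The far commutation $\Theta^{(n)}_j\Theta^{(n)}_k\cong\Theta^{(n)}_k\Theta^{(n)}_j$ for $|j-k|\geq 2$ follows from disjointness of the moves. The braid-type relation $\Theta^{(n)}_j\Theta^{(n)}_{j\pm1}\Theta^{(n)}_j\cong\Theta^{(n)}_j$ is established by enumerating the intermediate diagrams and observing that all contributions except those that reproduce a single $\Theta^{(n)}_j$-move cancel on $K_0(\F_n)$. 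This step is essentially bookkeeping once the action has been pinned down.

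For the adjunction $\Theta^{(n)}_k\vdash\Theta^{(n)}_{k-1}$, I would exploit the odd form $\beta$, which provides an isomorphism $V_n\cong\Pi V_n^*$. This makes $(-)\otimes V_n$ self-biadjoint up to a parity shift, so after restoring the twist $\Pi$ the residual question is how the eigenspace projections match under the adjunction. The key claim is that the generalized $k$-eigenspace of $\Omega^{(n)}$ on $M\otimes V_n$ pairs, under the induced unit/counit maps, with the generalized $(k-1)$-eigenspace of the corresponding dual endomorphism on $N\otimes V_n^*\cong\Pi(N\otimes V_n)$. The eigenvalue shift by one comes from commuting $\Omega^{(n)}$ past the coevaluation $\C\to V_n\otimes V_n^*$ induced by $\beta$, which inserts an extra contribution from the difference between the actions of $\gl(V_n)$ on $V_n$ and on $V_n^*$. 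I expect this last step to be the main technical obstacle: it requires careful tracking of parities, signs, and the explicit isotropic embedding $\p_n^*\hookrightarrow\gl(V_n)$, and the identification between the unit/counit and $ev,coev$ must be made precise. Once the shift is verified, the biadjunction $(-)\otimes V_n\dashv\Pi(-)\otimes V_n$ decomposes eigenspace-wise and yields the desired $\Theta^{(n)}_k\vdash\Theta^{(n)}_{k-1}$.
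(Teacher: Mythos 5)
This statement is cited from \cite{BDE:periplectic} (note the ``See \cite{BDE:periplectic}'' in the theorem header); the present paper supplies no proof, so there is no internal argument to compare your proposal against. Your outline is in the spirit of the original proof in \cite{BDE:periplectic} --- filter $L_n(\lambda)\otimes V_n$, read off the generalized eigenvalues of $\Omega^{(n)}$, and track the combinatorics on weight diagrams, then get the adjunction from $V_n^*\cong\Pi V_n$ plus an eigenvalue-shift computation through $\mathrm{ev}$/$\mathrm{coev}$. On that level the proposal is reasonable.

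However, as written the combinatorial core has a genuine gap. First, a small internal inconsistency: if the section indexed by $i$ has generalized eigenvalue $\bar{\lambda}_i-1$, then $\Theta_k^{(n)}$ selects the ball at position $k+1$ and moves it to $k$, not position $k$ to $k-1$ as you assert two lines later. More seriously, a one-directional single-ball move cannot, by itself, yield the Temperley--Lieb relation $e_j e_{j\pm1}e_j=e_j$ on $K_0$. Concretely, take a diagram with $j$ filled, $j-1$ empty, $j+1$ filled: your $\Theta_j$ sends it to ($j$ empty, $j-1$ filled, $j+1$ filled), your $\Theta_{j+1}$ sends that to ($j$ filled, $j-1$ filled, $j+1$ empty), and now the final $\Theta_j$ would need $j-1$ empty, which it is not, so $\Theta_j\Theta_{j+1}\Theta_j$ would vanish even though $\Theta_j$ does not. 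This is precisely the behavior of a nilpotent off-diagonal shift, not of the TL generator at $\delta=0$: the generator $e_j$ is a \emph{rank-one} operator on the two-dimensional subspace spanned by the configurations $\bullet\circ$ and $\circ\bullet$ at positions $j-1,j$, with all four matrix entries nonzero, and the relation $e_je_{j\pm1}e_j=e_j$ relies on the nontrivial diagonal contributions. In module terms, $\Theta_j\nabla_n(\lambda)$ must admit a two-step (co)standard filtration and the $K_0$-class must be the corresponding two-term sum; establishing that filtration (together with the precise multiplicities and parity shifts) is the substantive content of the cited result, and your sketch does not supply it. The ``bookkeeping'' you invoke to make the braid relation cancel on $K_0$ is exactly the missing piece. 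Likewise, for the adjunction $\Theta_k^{(n)}\vdash\Theta_{k-1}^{(n)}$, identifying how $\Omega^{(n)}$ transforms under the biadjunction $(-)\otimes V_n \dashv \Pi\bigl((-)\otimes V_n\bigr)$ and producing the exact shift by one is the entire assertion; flagging it as ``the main technical obstacle'' and leaving it unproved means the adjunction is not actually established by the proposal.
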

The functors $\Theta_k$ are exact, since $ - \otimes V_n$ is an exact functor.
\begin{theorem}[See \cite{BDE:periplectic}.]\label{old_thrm:transl_proj}

Let $P$ be an indecomposable projective module in $\F_n$. Then for any $i$, $\Theta_i P$ is indecomposable 
projective or zero. 
\end{theorem}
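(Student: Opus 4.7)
The plan is to handle projectivity first and then establish that $\Theta_i P$ is either zero or has a simple top, from which indecomposability follows. For projectivity, Theorem \ref{old_thrm:transl_func} supplies the adjunction $\Theta_i \vdash \Theta_{i-1}$. The right adjoint $\Theta_{i-1}$ is exact, being a direct summand (up to parity shift) of the exact functor $(-)\otimes V_n$. Since a left adjoint to an exact functor preserves projective objects, $\Theta_i P$ is projective.

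To deduce indecomposability I would compute the top of $\Theta_i P$. Writing $P = P_n(\lambda)$, the adjunction together with the standard identity $\dim \Hom(P_n(\lambda), N) = [N : L_n(\lambda)]$ for projective covers gives, for every weight $\mu$,
$$\dim \Hom(\Theta_i P, L_n(\mu)) \;=\; \dim \Hom(P, \Theta_{i-1} L_n(\mu)) \;=\; [\Theta_{i-1} L_n(\mu) : L_n(\lambda)].$$
Thus showing that $\Theta_i P$ is indecomposable or zero reduces to the combinatorial statement that there is at most one $\mu$ for which $L_n(\lambda)$ appears as a composition factor of $\Theta_{i-1} L_n(\mu)$, and that in this case the multiplicity equals one.

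The main work is this combinatorial verification, which I would carry out using the weight diagram calculus developed in \cite{BDE:periplectic}: the functor $\Theta_{i-1}$ is controlled by a prescribed collection of ``ball moves'' involving positions $i-1$ and $i$ on the diagram $d_\mu$. One then needs to check that the inverse move — recovering $d_\mu$ from a prescribed outcome $d_\lambda$ — is uniquely determined whenever it exists. The principal obstacle is the boundary-case analysis, depending on whether positions $i-1$ and $i$ in $d_\lambda$ are labelled $\bullet$ or $\circ$ and on the interaction with the dominance condition on weights; this is routine but involved, and one must also keep track of the generalized eigenspace projection for $\Omega^{(n)}$ that distinguishes $\Theta_i$ from other summands of $(-)\otimes V_n$. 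Once all cases are settled, $\Theta_i P$ has simple (or zero) top; being projective, it is therefore either zero or equal to $P_n(\mu_0)$ for the unique $\mu_0$ produced by the inverse move.
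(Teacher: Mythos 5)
The statement is cited in this paper from \cite{BDE:periplectic} with no proof given, so there is no in-paper proof to compare against; the relevant argument is the one in \cite[Section 7.2]{BDE:periplectic}.

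Your projectivity step is correct and standard: $\Theta_i \vdash \Theta_{i-1}$ with $\Theta_{i-1}$ exact, so $\Hom(\Theta_i P,-)\cong\Hom(P,\Theta_{i-1}(-))$ is a composition of exact functors and $\Theta_i P$ is projective. Your reduction of indecomposability to the statement that $\dim\Hom(\Theta_i P,L_n(\mu))=[\Theta_{i-1}L_n(\mu):L_n(\lambda)]$ is $\le 1$ with at most one $\mu$ achieving $1$ is also formally correct. The problem is that this reduction is essentially a tautological restatement rather than a proof: since $\Theta_i P$ is projective, the multiplicity $[\Theta_{i-1}L_n(\mu):L_n(\lambda)]$ \emph{is} the multiplicity of $P_n(\mu)$ in a direct-sum decomposition of $\Theta_i P$, so you have translated ``$\Theta_i P$ is indecomposable or zero'' into an equivalent statement without gaining any new leverage. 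Worse, the plan to verify the combinatorial claim via ``ball moves'' on $d_\mu$ would not work directly: the weight-diagram/ball-move calculus in \cite{BDE:periplectic} describes the action of $\Theta_j$ on \emph{standard} (and costandard, and projective) modules, where it sends one such module to another single one or to zero. Simple modules behave very differently — $\Theta_{j}L_n(\mu)$ is typically not simple and its composition factors are governed by the (nontrivial) decomposition numbers of $\F_n$, which are themselves a major output of \cite{BDE:periplectic}. So claiming this verification is ``routine but involved'' substantially understates the gap.

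The argument in \cite{BDE:periplectic} goes the other way around: one first establishes that $\Theta_i$ carries $\Delta_n(\mu)$ to a single standard module or to zero (this is where ball moves live), so that $\Theta_i P_n(\lambda)$ inherits an explicit $\Delta$-flag from that of $P_n(\lambda)$. One then identifies $\Theta_i P_n(\lambda)$ with a specific indecomposable projective $P_n(\mu')$ (or shows it vanishes) by an argument about the resulting $\Delta$-flag, BGG reciprocity, and the highest-weight structure — not by computing composition factors of translates of simples. If you want to repair your proposal, I would recommend following that route: prove the action of $\Theta_i$ on standards is multiplicity-free first, and deduce the statement for projectives from there, rather than trying to control $[\Theta_{i-1}L_n(\mu):L_n(\lambda)]$ directly.
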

For more details on the structure of $\F_n$ we refer the reader 
to 
\cite{BDE:periplectic}.

\subsubsection{Blocks}\label{sssec:blocks}
 There are $2(n+1)$ blocks in the category $\F_n$ has blocks. These blocks are in bijection with the set
$\{ -n, -n+2 ,\ldots, n-2, n\}\times\{+,-\}$.

We have a decomposition 
$$\F_n=\bigoplus_{k\in  \{ -n, -n+2 ,\ldots, n-2, n\}}\left( \F_n \right)^+_k\oplus \bigoplus_{k\in  \{ -n, -n+2 ,\ldots, n-2, n\}}\left( \F_n \right)^-_k,$$
where the functor $\Pi$ (parity change) induces an equivalence $\left( \F_n \right)_k^+ \cong \left( \F_n \right)_k^-$. The block $\left( \F_n \right)_k$ contains all simple modules $L(\lambda)$ with $\kappa(\lambda)=k$.

Hence we may define {\it up-to-pairity blocks} $$\F^k_n := \left( \F_n \right)_k^+ \oplus \left( \F_n \right)_k^-.$$
By abuse of terminology, we will just call these ``blocks'' throughout the paper.

\begin{theorem}[See \cite{BDE:periplectic}.]\label{old_thrm:blocks_action}
Let $i \in \mathbb Z$, $k \in \{ -n, -n +2 , \ldots, n-2, n\}$. Then we have 
$$\Theta_i \VeraA{\F_n^k}\subset \begin{cases} \F_n^{k+2} \,\,\text{if}\,\,i\,\, \text{is odd}\\
\mathcal{F}_n^{k-2} \,\,\text{if}\,\,i\,\, \text{is even}\\
\end{cases}$$
\end{theorem}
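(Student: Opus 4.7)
The plan is to reduce the statement to simple modules and then show that the tensor Casimir eigenvalue of $\Omega^{(n)}$ determines, via its parity, whether a ``ball move'' in the weight diagram lands on an even or on an odd position.

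Since $\Theta_i$ is exact and each up-to-parity block $\F_n^k$ is a Serre subcategory of $\F_n$, it suffices to check the containment on simple modules. Thus for $L(\lambda)\in\F_n^k$ we must show that every composition factor $L(\mu)$ of $\Theta_i L(\lambda)$ satisfies $\kappa(\mu)=k+2$ when $i$ is odd and $\kappa(\mu)=k-2$ when $i$ is even. Because the weights of $V_n$ are of the form $\pm\eps_j$, every composition factor $L(\mu)$ of $L(\lambda)\otimes V_n$ has highest weight of the form $\mu=\lambda\pm\eps_j$, which in the weight-diagram picture moves a single black ball by one step. Such a move shifts exactly one $\bar\lambda_j$ by $\pm 1$, and so changes $\kappa(\lambda)=\sum_i(-1)^{\bar\lambda_i}$ by $+2$ if the new position is even and by $-2$ if it is odd. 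The theorem therefore reduces to the parity claim: the eigenvalue of $\Omega^{(n)}$ on the generalized eigenspace projecting onto the $L(\mu)$-component is odd precisely when the moved ball lands on an even position.

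To prove the parity claim, I would compute $\Omega^{(n)}$ explicitly. Choosing bases $\{X_a\}\subset\p(n)$ and $\{X^a\}\subset\p(n)^*$ dual under the $\p(n)$-equivariant pairing $\p(n)^*\otimes\p(n)\to\C$ induced by the trace form on $\gl(V_n)$, one can write $\Omega^{(n)}$ acting on $V_n\otimes L(\lambda)$ as $\sum X^a\otimes X_a$, where $X^a$ acts on $V_n$ via the embedding $\p(n)^*\hookrightarrow\gl(V_n)$. Using the triangular decomposition $\p(n)\cong\p(n)_{-1}\oplus\p(n)_0\oplus\p(n)_1$ and evaluating on a singular vector of weight $\mu$ in $V_n\otimes L(\lambda)$, the $\p(n)_0=\gl(n)$ contribution yields a classical Casimir-type scalar depending linearly on $\bar\lambda_j$, while the contributions from $\p(n)_{\pm 1}$ add an even integer correction that preserves parity.

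The main obstacle is this explicit eigenvalue computation. Since $\p(n)$ admits no invariant non-degenerate bilinear form, classical Harish-Chandra arguments are unavailable, and one must work directly with the decomposition $\gl(V_n)=\p(n)\oplus\p(n)^*$; in particular, the non-trivial contribution of the odd pieces $\p(n)_{\pm 1}$ to $\Omega^{(n)}$ has to be controlled, and the fact that it is always an even integer is exactly what makes the theorem work. Once the parity matching between the eigenvalue of $\Omega^{(n)}$ and the parity of the ending position is established, the conclusion that odd $i$ forces $\Delta\kappa=+2$ and even $i$ forces $\Delta\kappa=-2$ is immediate.
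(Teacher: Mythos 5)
This theorem is stated in the paper as a citation (``See \cite{BDE:periplectic}'') and is not proved internally, so there is no proof in this paper to compare yours against; I will therefore assess your argument on its own merits.

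Your outline is the right framework: reduce to simple modules using exactness of $\Theta_i$, observe that the relevant composition factors have highest weights of the form $\lambda\pm\eps_j$ (a one-step move in the weight diagram), note that such a move changes $\kappa$ by $2(-1)^{\bar\mu_j}$, and then relate the parity of the Casimir eigenvalue to the parity of the target position. The arithmetic for $\kappa$ is correct. However, your argument has a genuine gap exactly where you yourself locate ``the main obstacle'': the central claim that the eigenvalue of $\Omega^{(n)}$ on the $L(\mu)$-piece has odd parity precisely when the moved ball lands on an even position is asserted, not proved. You sketch that the $\p(n)_0\cong\gl(n)$ part of $\Omega^{(n)}$ contributes a term linear in $\bar\lambda_j$, and that the $\p(n)_{\pm1}$ pieces contribute an even-integer correction, but neither is computed, and the second claim is precisely the non-obvious input that makes the parity match work. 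Without writing down $\Omega^{(n)}$ in a dual-basis pair $\{X_a\}\subset\p(n)$, $\{X^a\}\subset\p(n)^*$ adapted to the triangular decomposition and evaluating it on a singular vector of weight $\lambda\pm\eps_j$ in $V_n\otimes\nabla(\lambda)$ (or $\Delta(\lambda)\otimes V_n$), you have not proved the theorem; you have only reduced it to the lemma you need. Two smaller points also deserve a sentence each: composition factors of $L(\lambda)\otimes V_n$ need not themselves have highest weight $\lambda\pm\eps_j$ (they can sit lower in the order), so you should pass through the costandard filtration of $\nabla(\lambda)\otimes V_n$ and use that $\kappa$ is constant on blocks; and since $\p(n)$ carries no invariant bilinear form, the existence and equivariance of the pairing $\p(n)^*\otimes\p(n)\to\C$ coming from the trace form on $\gl(V_n)$ should be invoked explicitly before you expand $\Omega^{(n)}=\sum_a X^a\otimes X_a$.
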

Finally, we introduce some notation:

\begin{notation}\label{notn:seq}
Let $I = (i_1, i_2, \ldots, i_k)$ be a sequence with $i_1, i_2, \ldots, i_k \in \Z$.
\begin{enumerate}
 \item We denote by $$\Theta^{(n)}_{I} := \Theta^{(n)}_{i_1} \circ \Theta^{(n)}_{i_2} \circ \ldots \circ \Theta^{(n)}_{i_k}$$ the composition of the corresponding translation functors.
 \item We set
 $$ t(I):= \sum_{s=0}^k (-1)^{i_s+1}.$$
\end{enumerate}
\end{notation}
The following is an immediate corollary of Theorem \ref{old_thrm:blocks_action}:
\begin{corollary}\label{old_cor:blocks_action}
 For any $n \geq 1$,  $l \in \{-n, -(n-2) , \ldots, n-2, n\}$ and any integer sequence $I = (i_1, i_2, \ldots, i_k)$ we have: $$M \in \F_l^n \; \Longrightarrow \Theta^{(n)}_I M \in \F_{l + 2t(I)}^n.$$
\end{corollary}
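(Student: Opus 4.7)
The plan is to prove this by a straightforward induction on the length $k$ of the sequence $I = (i_1, \ldots, i_k)$, using Theorem \ref{old_thrm:blocks_action} as the one-step building block. The key observation is that $t(I)$ is additive under concatenation of sequences: writing $I' = (i_2, \ldots, i_k)$ for the tail, one has
\[
 t(I) \;=\; (-1)^{i_1+1} + t(I'),
\]
and the summand $(-1)^{i_1+1}$ equals $+1$ when $i_1$ is odd and $-1$ when $i_1$ is even, which matches exactly the block shift $\pm 2$ produced by a single translation functor $\Theta_{i_1}$ in Theorem \ref{old_thrm:blocks_action}.

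The base case $k = 0$ is the empty sequence, where $\Theta^{(n)}_{I}$ is the identity functor and $t(I) = 0$, so the conclusion $M \in \F_l^n$ is immediate. For the inductive step, I would use the definition $\Theta^{(n)}_{I} = \Theta^{(n)}_{i_1} \circ \Theta^{(n)}_{I'}$ together with the inductive hypothesis applied to the shorter sequence $I'$, which gives $\Theta^{(n)}_{I'} M \in \F_{l + 2t(I')}^n$. Applying Theorem \ref{old_thrm:blocks_action} to $\Theta^{(n)}_{i_1}$ then places $\Theta^{(n)}_{I} M$ in $\F_{l + 2t(I') + 2}^n$ if $i_1$ is odd and in $\F_{l + 2t(I') - 2}^n$ if $i_1$ is even; in either case this is $\F_{l + 2t(I)}^n$ by the additivity identity above.

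There is essentially no obstacle here — the only thing to verify is the sign/parity bookkeeping identifying the block shifts of Theorem \ref{old_thrm:blocks_action} with the summands of $t(I)$, which is what motivates the very definition of $t$. One minor formal point to note in passing is that the indices $l + 2t(I)$ appearing in the statement should be interpreted modulo the range $\{-n, -n+2, \ldots, n\}$ of allowed block labels; this is automatically taken care of by Theorem \ref{old_thrm:blocks_action} itself, since that theorem only produces blocks in the valid range (translation functors landing outside the range are zero, and the empty block is vacuously contained in any $\F_n^{k\pm 2}$).
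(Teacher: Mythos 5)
Your proof is correct and is essentially the only natural argument here; the paper itself omits the proof, calling the statement ``an immediate corollary'' of Theorem~\ref{old_thrm:blocks_action}, and your induction on the length of $I$, using the additivity $t(I) = (-1)^{i_1+1} + t(I')$ to match the $\pm 2$ block shifts, is precisely what is implicitly intended. Your closing remark about the out-of-range case is consistent with the paper's convention (used explicitly in the proof of Proposition~\ref{prop:DS_blocks}) that $\F_n^{m}=0$ when $\abs{m}>n$, so there is nothing missing.
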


\subsection{The Duflo-Serganova functor}\label{sec:DS_functor}
\subsubsection{Definition and basic properties}

Let $n \geq 3$, and let $x\in \p(n)$ be an odd element such that $[x,x]=0$. We define the following correspondence of vector superspaces:
\begin{definition}[See \cite{DufloSer:functor}]\label{def:DS}
 Let $M \in \F_n$. We define 
$$ {DS}_x(M) = \quotient{Ker(x \rvert_M)}{Im(x \rvert_M)}.$$ 
\end{definition}
 
The vector superspace $\p_x:=DS_x\p(n)$ is naturally equipped with a Lie superalgebra structure.\VeraA{ One can check by direct computations that
  $\p_x$ is isomorphic to $\p(n-s)$ where $s$ is the rank of $x$. The above correspondence defines an SM-functor $DS_x:\F_n \to \F_{n-s}$, called the {\it Duflo-Serganova functor}}. Such functors were introduced in \cite{DufloSer:functor}.

\section{The Duflo-Serganova functor and the tensor Casimir}
  Let $n \geq 3$, and let $x\in \p(n)_{\bar{1}}$ be such that $[x,x]=0$. Let $s := \rk(x)$.
 
 Definition \ref{def:DS} then gives us a functor $DS_x: \F_n \to \F_{n-s}$.
\begin{lemma}\label{lem:casimir_goes_to_casimir}
We have: $DS_x({\Omega}) = {\Omega}^{(n-s)} DS_n$, where ${\Omega}^{(n)}$ 
is the tensor Casimir for $\p(n)$, and  $\Omega_{n-s}$ is the tensor Casimir for $\p(n-s)$.

That is, for any 
$M \in \mathcal{F}_n$, $DS_x({\Omega}^{(n)}_M) = {\Omega}^{(n-s)}_{DS_x(M)}$ as endomorphisms of 
$V_{n-s} \otimes DS_x(M)$.
\end{lemma}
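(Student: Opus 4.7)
The idea is to exploit that $DS_x$ is a symmetric monoidal functor and that the tensor Casimir $\Omega^{(n)}_M$ is built entirely out of structural arrows (coevaluation, embeddings defined by an invariant pairing, and action maps) that any SM functor between module categories must preserve. So the claim should follow by applying $DS_x$ step by step to the four-arrow composition in Definition \ref{def:Casimir} and identifying each output with the corresponding arrow for $\p(n-s)$.

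First I would fix, once and for all, the canonical natural isomorphisms that make the comparison possible. Since $DS_x$ is SM and $V_n$ is dualizable in $\F_n$, we have $DS_x(V_n) \cong V_{n-s}$ and $DS_x(V_n^*) \cong V_{n-s}^*$, and these intertwine $coev$ and $ev$. In particular
$$DS_x(\gl(V_n)) = DS_x(V_n \otimes V_n^*) \cong V_{n-s} \otimes V_{n-s}^* = \gl(V_{n-s}),$$
and the image under $DS_x$ of the action $act: \gl(V_n) \otimes V_n \to V_n$ is identified with the analogous action for $V_{n-s}$. By the discussion of the Duflo-Serganova functor in Section \ref{sec:DS_functor}, we also have $DS_x(\p(n)) \cong \p(n-s)$ as Lie superalgebras, and for any $M \in \F_n$ the induced $\p(n-s)$-action on $DS_x(M)$ is exactly $DS_x(act: \p(n) \otimes M \to M)$.

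Next I would verify that the orthogonal decomposition is preserved. The pairing $tr = ev \circ \sigma_{V_n, V_n^*}$ is expressed entirely in SM data, so $DS_x(tr)$ corresponds under the identifications above to the analogous trace pairing on $\gl(V_{n-s})$. Hence the decomposition
$$\gl(V_{n-s}) = DS_x(\gl(V_n)) = DS_x(\p_n) \oplus DS_x(\p_n^*)$$
is orthogonal with respect to $tr$ on $\gl(V_{n-s})$, and $DS_x(\p_n)$ coincides with $\p_{n-s} \subset \gl(V_{n-s})$. Since $\p_{n-s}$ is a maximal isotropic subspace, its orthogonal complement is itself, but here the relevant pairing is between the two Lagrangian summands $\p_{n-s}$ and $\p_{n-s}^*$; this forces $DS_x(\p_n^*) = \p_{n-s}^*$ inside $\gl(V_{n-s})$, and $DS_x$ of the embedding $i_*: \p_n^* \hookrightarrow \gl(V_n)$ is identified with $i_*^{(n-s)}: \p_{n-s}^* \hookrightarrow \gl(V_{n-s})$.

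Finally I would conclude by applying $DS_x$ arrow-by-arrow to the composition in Definition \ref{def:Casimir}: the coevaluation for $\p(n)$ goes to the coevaluation for $\p(n-s)$, the embedding $i_*$ goes to $i_*^{(n-s)}$, both action maps go to their counterparts for $\p(n-s)$, and SM-functoriality turns the tensored identity arrows into the correct identities on $V_{n-s}$ and $DS_x(M)$. This exhibits $DS_x(\Omega^{(n)}_M)$ as the composition defining $\Omega^{(n-s)}_{DS_x(M)}$. The only real content beyond functoriality is the orthogonal-decomposition step above, which is the main (and essentially the only) technical obstacle.
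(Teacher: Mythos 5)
Your proposal is correct and takes essentially the same approach as the paper, which simply notes that the lemma follows from the definition of the tensor Casimir together with the fact that $DS_x$ is an SM functor; you have merely expanded that one-line argument by making explicit the identifications $DS_x(V_n)\cong V_{n-s}$, $DS_x(\gl(V_n))\cong\gl(V_{n-s})$, the compatibility of $DS_x$ with the trace pairing and the decomposition $\p_n\oplus\p_n^*$, and the arrow-by-arrow comparison of the two compositions.
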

\begin{proof}
 This follows directly from the definition of the tensor Casimir (Definition \ref{def:Casimir}), as well as the fact that $DS_x$ is a SM 
functor.
\end{proof}

\begin{corollary}\label{cor:DS_translation_commute}
 The functor $DS_x$ commutes with translation functors, that is we have a natural isomorphism of functors $$DS_x {\Theta}^{(n)}_k \stackrel{\sim}{\longrightarrow}
\Theta^{(n-s)}_k DS_x$$ for any $k \in \Z$.

\end{corollary}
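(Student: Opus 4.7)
The plan is to combine Lemma \ref{lem:casimir_goes_to_casimir} with the fact that $DS_x$ is an additive SM functor, reducing the statement to a Jordan-decomposition/projector argument transported through $DS_x$. First, I would identify the composition $DS_x \circ ((-)\otimes V_n)$ with $((-) \otimes V_{n-s}) \circ DS_x$. Since $DS_x$ is SM, $DS_x(M \otimes V_n) \cong DS_x(M) \otimes DS_x(V_n)$, so it suffices to recognize $DS_x(V_n)$ as $V_{n-s}$: $DS_x(V_n)$ is a $(n-s|n-s)$-dimensional module over $\p_x \cong \p(n-s)$, and $DS_x$ transports the non-degenerate odd symmetric form $\beta$ on $V_n$ to a non-degenerate odd symmetric form on $DS_x(V_n)$ (as $DS_x$ preserves all the SM data), pinning down $DS_x(V_n) \cong V_{n-s}$ as the defining $\p(n-s)$-module.

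Next, by Lemma \ref{lem:casimir_goes_to_casimir}, $DS_x$ carries the endomorphism $\Omega^{(n)}$ of $(-)\otimes V_n$ to $\Omega^{(n-s)}$ of $(-)\otimes V_{n-s}$. On any finite-dimensional $M \otimes V_n$, the projector $p_k$ onto the generalized $k$-eigenspace of $\Omega^{(n)}$ can be expressed as a polynomial $q_k(\Omega^{(n)})$ in $\Omega^{(n)}$ via the Chinese Remainder Theorem applied to the minimal polynomial of $\Omega^{(n)}$. Applying the additive functor $DS_x$, which respects composition of endomorphisms, yields $DS_x(p_k) = q_k(\Omega^{(n-s)})$, which is exactly the projector onto the generalized $k$-eigenspace of $\Omega^{(n-s)}$ acting on $DS_x(M)\otimes V_{n-s}$. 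Because $DS_x$ is additive and hence sends direct summands to direct summands, the image of $DS_x(p_k)$ on $DS_x(M \otimes V_n)$ is $DS_x(\Theta'^{(n)}_k M)$, and it coincides with $\Theta'^{(n-s)}_k(DS_x M)$; these identifications are natural in $M$.

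Finally, I would handle the parity twist in $\Theta^{(n)}_k = \Pi^k \Theta'^{(n)}_k$ by observing that $DS_x \circ \Pi \cong \Pi \circ DS_x$ (immediate from the definition of $DS_x$ as the homology of an odd operator, which anticommutes in parity-coherent fashion with $\Pi$), so that
\[
DS_x \Theta^{(n)}_k M = \Pi^k DS_x \Theta'^{(n)}_k M \cong \Pi^k \Theta'^{(n-s)}_k DS_x M = \Theta^{(n-s)}_k DS_x M.
\]
The only genuinely subtle point is the canonicity of the identification $DS_x(V_n) \cong V_{n-s}$: one needs enough naturality in this isomorphism to ensure the resulting iso of functors does not depend on unpleasant choices. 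Everything else is a formal consequence of the preceding Casimir-compatibility lemma together with additivity and the SM structure of $DS_x$, so I do not anticipate a substantive obstacle beyond this bookkeeping.
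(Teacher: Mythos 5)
Your argument is correct and follows the same route as the paper: use that $DS_x$ is a symmetric monoidal functor with $DS_x(V_n)\cong V_{n-s}$ to get the isomorphism $DS_x\circ\Theta^{(n)}\cong\Theta^{(n-s)}\circ DS_x$, invoke Lemma \ref{lem:casimir_goes_to_casimir} to match the tensor Casimirs through that isomorphism, and deduce the corresponding identification of generalized eigenspaces. The paper compresses the final step into ``hence $\eta$ induces an isomorphism,'' whereas you make it explicit via the polynomial-projector/additivity argument and spell out the $\Pi^k$ bookkeeping; this is the same proof with the implicit details filled in.
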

\begin{proof}
  Recall that $DS_x$ is a SM functor and $DS_x(V_n) \cong V_{n-s}$. Hence we have a natural isomorphism $\eta:DS_x \Theta \longrightarrow \Theta DS_x$, where $\Theta^{(n)} = ( - )\otimes V_n$   is as in Definition \ref{def:transl_functors}.
  
  Now, consider $DS_x\left(\Omega_n\right)$ (the tensor Casimir). By Lemma \ref{lem:casimir_goes_to_casimir}, the diagram below commutes:
  $$ \xymatrix{&DS_x\Theta^{(n)} \ar[r]^{\eta} \ar[d]_{DS_x\left(\Omega_n\right)} &\Theta^{(n-s)} DS_x \ar[d]^{\Omega_{n-s}DS_x} \\ &DS_x\Theta^{(n)}  \ar[r]^{\eta} &\Theta^{(n-s)} DS_x}.$$

  Hence $\eta$ induces an isomorphism $DS_x \Theta^{(n)}_k \cong \Theta^{(n-s)}_k DS_x$ for any $k \in \Z$, as required.

\end{proof}

\section{Main result}
Throughout this section, we will work with functors $DS_x: \F_n \to \F_{n-2}$ where $x \in  \in \mathfrak{p}(n)_{-1}$ has rank $2$ and satisfies $[x,x]=0$. Here $\mathfrak{p}(n)_{-1}$ is as in Section \ref{sssec:def_periplectic}.

\begin{theorem}\label{thrm:KW}
 We have $\sdim M =0$ if $M \in \F^k_n$, $ k\neq 0, \pm 1$.
\end{theorem}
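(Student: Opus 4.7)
The plan is to induct on $n$, using the Duflo--Serganova functor $DS_x: \F_n \to \F_{n-2}$ attached to $x \in \p(n)_{-1}$ of rank $2$ (the case singled out at the start of this section). Two properties of $DS_x$ drive the argument: it is a symmetric monoidal functor, so $\sdim DS_x(M) = \sdim M$ for every $M \in \F_n$; and, by Corollary \ref{cor:DS_translation_commute}, it commutes with every translation functor $\Theta^{(n)}_i$.

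The crux of the proof is a block-preservation lemma: $DS_x$ sends $\F_n^k$ into $\F_{n-2}^k$, with the convention that $\F_{n-2}^k = 0$ whenever $|k| > n-2$. A clean route is categorical: the translation functors generate a Temperley--Lieb action on $K_0(\F_n)$ by Theorem \ref{old_thrm:transl_func}, and $DS_x$ intertwines this action with its counterpart on $K_0(\F_{n-2})$. The invariant $\kappa$ indexing the blocks is detected by the TL-action, so the labels must be preserved; tracking the trivial representation, which $DS_x$ sends to the trivial representation of $\F_{n-2}$, pins down the alignment of indices. A parallel combinatorial argument is: $DS_x$ removes a pair of $\bullet$'s from $d_\lambda$ at two consecutive integer positions $a, a+1$, and $(-1)^a + (-1)^{a+1} = 0$ shows $\kappa$ is unchanged, while in the extremal block $k = \pm n$ all $\bullet$'s of $d_\lambda$ share the same parity, so no consecutive pair exists and $DS_x(L_n(\lambda)) = 0$.

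With the lemma in hand, the theorem follows by induction on $n$. The base case $n \leq 2$ is immediate: for $n \leq 1$ no block has $|k| \geq 2$, and for $n = 2$ the only relevant blocks are $k = \pm 2 = \pm n$, in which $DS_x$ already annihilates every module (since $\F_0^{\pm 2} = 0$), giving $\sdim M = \sdim DS_x(M) = 0$. For $n \geq 3$ and $M \in \F_n^k$ with $|k| \geq 2$: if $|k| = n$, then $DS_x(M) = 0$ by the extremal case of the lemma; otherwise $2 \leq |k| \leq n-2$, and $DS_x(M) \in \F_{n-2}^k$ lies in a non-maximally atypical block of $\F_{n-2}$. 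By the inductive hypothesis every simple constituent of $DS_x(M)$ has vanishing superdimension, so $\sdim M = \sdim DS_x(M) = 0$.

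The main obstacle is the block-preservation lemma itself. One must identify the $\kappa$-grading of $K_0(\F_n)$ precisely enough inside the Temperley--Lieb action to conclude that $DS_x$ transports each label $k$ to the same label $k$ in $K_0(\F_{n-2})$ rather than merely into a larger invariant subspace, from which the vanishing on the extremal block $\F_n^{\pm n}$ follows from the absence of a target; the alternative combinatorial path demands an explicit description of the weight-diagram change under $DS_x$ for $x \in \p(n)_{-1}$ of rank $2$.
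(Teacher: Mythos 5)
Your overall strategy coincides with the paper's: induct on $n$, use that $DS_x\colon\F_n\to\F_{n-2}$ is symmetric monoidal (hence superdimension-preserving) and commutes with translation functors, and reduce everything to a block-preservation statement, namely $DS_x(\F_n^k)\subset\F_{n-2}^k$ with the convention that extremal blocks are annihilated. That much is exactly Proposition~\ref{prop:DS_blocks} in the paper, and the deduction of the theorem from it, including the treatment of base cases, is essentially the same.

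The gap is precisely where you flag it: the block-preservation lemma is not actually proved, and neither sketch you offer closes it. The ``Temperley--Lieb intertwining plus tracking the trivial module'' route is too weak as stated --- knowing that $DS_x$ intertwines the $\Theta_i$-actions on $K_0$ shows block labels are transported by \emph{some} map compatible with the TL-structure, but it does not by itself pin that map down to the identity on labels, nor does it give the vanishing $DS_x(\F_n^{\pm n})=0$, which is exactly the input you need to ``pin down the alignment''. The combinatorial route asserts that $DS_x$ removes a pair of $\bullet$'s at consecutive positions from $d_\lambda$: this is a nontrivial claim about the image of a simple module under $DS_x$ that is neither proved nor cited, and it would in any case need to be supplemented by the induced shift of $\rho^{(n)}$ to $\rho^{(n-2)}$ (the $\kappa$-computation happens to survive this shift, but the diagram is not literally ``$d_\lambda$ minus a pair''), and by an argument handling non-simple $M$, since $DS_x$ is not exact so the subquotients of $DS_x(M)$ are not controlled by those of the composition factors of $M$.

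The paper proves Proposition~\ref{prop:DS_blocks} by a different mechanism, which you should compare against your plan. First, Lemma~\ref{lem:transl_into_typical} shows every simple $L$ can be pushed into either typical block $\F_n^{\pm n}$ by a nonvanishing composition $\Theta^{(n)}_I$ (this uses the projective-module combinatorics of~\cite{BDE:periplectic} and the adjunction $\Theta_i\vdash\Theta_{i-1}$). Second, modules in the typical blocks $\F_n^{\pm n}$ are free over $U(\p(n)_{-1})$, which forces $DS_x$ to annihilate them --- this is what replaces your parity observation about $\bullet$'s and is the concrete source of the vanishing on extremal blocks. Third, if a simple subquotient $L'$ of $DS_x(L)$ sat in $\F_{n-2}^l$ with $l\neq k$, then translating $L'$ to a typical block of $\F_{n-2}$ and transporting the same $\Theta_I$ across the commutation $DS_x\Theta_I^{(n)}\cong\Theta_I^{(n-2)}DS_x$ would land $\Theta_I^{(n)}L$ in a block of $\F_n$ of index $\geq n$ in absolute value, which is either zero or is annihilated by $DS_x$ --- contradicting $\Theta_I^{(n-2)}L'\neq 0$. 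This translation-functor squeeze, rather than either of your proposed sketches, is what actually forces the block labels to match.

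One smaller remark on your base case: you invoke $DS_x\colon\F_2\to\F_0$ and block preservation for $n=2$, but the $DS$-machinery and Proposition~\ref{prop:DS_blocks} are set up in the paper only for $n\geq 3$, so using them at $n=2$ would be circular. The paper instead handles $n=2$ directly: the blocks $\F_2^{\pm 2}$ are typical, so simples equal costandards, which are free over $U(\p(2)_{-1})$ and hence have superdimension zero; additivity of $\sdim$ over short exact sequences finishes it.
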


\begin{proof}
Let $x \in \mathfrak{p}(n)_{-1}$ such that $[x,x]=0$, $\rk(x)=2$. 

We prove the statement by induction on $n$. 

The base case $n=1$ is tautological (in that case, there are no non-zero blocks except $\F^1_{\pm 1}$); in the base case $n=2$, it is enough to check for simple $M \in \F^2_{\pm 2}$.
The blocks $\F^2_{\pm 2}$ are so-called typical blocks, hence all the simple objects in $\F^2_{\pm 2}$ are costandard ($L(\mu) = \nabla(\mu)$), and hence have dimension zero (cf. \cite{BDE:periplectic}). 

Next, for the inductive step, let $n \geq 3$, and assume that our statement holds for $n-2$. 

Let $M \in \F^k_n$, $k \notin \{0, 1\}$. We use Proposition \ref{prop:DS_blocks} below, which states that $DS_x$ preserves blocks, to show that $DS_x(M)$ lies in the corresponding block $\F_{n-2}^k$ (if $n=k$, then $DS_x(M)=0$). 

The fact that the functor $DS_x: \F_n \to \F_{n-2}$ is SM, and hence preserves dimensions, allows us to use the inductive assumption to show that $ \sdim DS_x(M)=0$, and hence $\sdim M=0$, as required.

\end{proof}

  \begin{proposition}\label{prop:DS_blocks}
  Let $x \in \mathfrak{p}(n)_{-1}$ such that $[x,x]=0$, $\rk(x)=2$. Let $M \in \F^k_n$. Then $DS_x (M) \in \F_{n-2}^k$ (if $k = \pm n$, then $DS_x (M)=0$).
  \end{proposition}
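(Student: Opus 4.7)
My plan is to combine Corollary \ref{cor:DS_translation_commute} with the six-term exact sequence for the Duflo-Serganova $2$-periodic complex.

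First, since $DS_x$ is a symmetric monoidal functor we have $DS_x(\triv_n) \cong \triv_{n-2}$, so Corollary \ref{cor:DS_translation_commute} yields
\[DS_x\bigl(\Theta^{(n)}_I \triv_n\bigr) \;\cong\; \Theta^{(n-2)}_I \triv_{n-2}\]
for every integer sequence $I$. Since $\triv_n \in \F^{n \bmod 2}_n$ and $\triv_{n-2} \in \F^{n \bmod 2}_{n-2}$, Corollary \ref{old_cor:blocks_action} places both sides in the block with label $(n \bmod 2) + 2t(I)$, so the proposition holds when $M = \Theta^{(n)}_I \triv_n$.

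Next, $[x,x]=0$ forces $x^2 = 0$ on every $M \in \F_n$, making $(M,x)$ a $2$-periodic complex with cohomology $DS_x(M)$. A short exact sequence $0 \to A \to B \to C \to 0$ in $\F_n$ then produces a cyclic $6$-term exact sequence
\[DS_x(A)_{\bar 0} \to DS_x(B)_{\bar 0} \to DS_x(C)_{\bar 0} \to DS_x(A)_{\bar 1} \to DS_x(B)_{\bar 1} \to DS_x(C)_{\bar 1} \to DS_x(A)_{\bar 0},\]
from whose exactness I can read off that the composition factors of $DS_x$ of any one of $A, B, C$ are contained in the union of the composition factors of $DS_x$ of the other two.

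Finally, since the weights of $V_n$ are $\pm\eps_1, \ldots, \pm\eps_n$, every integral weight $\lambda$ arises in $V_n^{\otimes N}$ for $N$ large enough; since $V_n^{\otimes N} \cong \bigoplus_{|I|=N}\Theta^{(n)}_I \triv_n$, every simple $L_n(\lambda) \in \F^k_n$ occurs as a composition factor of some $\Theta^{(n)}_I \triv_n \in \F^k_n$ with $t(I) = (k - n\bmod 2)/2$. The main obstacle is to transfer the block conclusion from the whole of $\Theta^{(n)}_I \triv_n$ (known by the first step) to its individual composition factors, since the six-term bound is symmetric in $A, B, C$. I plan to handle this by first passing through projective modules: using Theorem \ref{old_thrm:transl_proj}, I would show that every indecomposable projective $P_n(\mu) \in \F^k_n$ is a direct summand of $\Theta^{(n)}_I P_0$ for a well-chosen projective seed $P_0$, so that $DS_x(P_n(\mu)) \in \F^k_{n-2}$ by the first step; then I would induct on the composition length of the projective cover $P_n(\lambda)$, using the six-term sequence applied to $0 \to \mathrm{rad}(P_n(\lambda)) \to P_n(\lambda) \to L_n(\lambda) \to 0$ (and then to an arbitrary projective cover of $M$), to pass from projectives to simples and finally from simples to arbitrary $M \in \F^k_n$.
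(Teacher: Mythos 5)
Your first step and the six-term exact sequence are both sound (the latter follows from the snake lemma applied to $x$ acting on a short exact sequence, once one checks that $\ker x/\operatorname{im}x$ really is the cohomology of the $2$-periodic complex $(M,x)$). The problem is precisely the obstacle you flag, and the proposed fix does not overcome it.

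The six-term sequence is symmetric: for a short exact sequence $0\to A\to B\to C\to 0$, knowing the block decomposition of $DS_x$ of any \emph{two} of the three objects constrains the third, but knowing it for $B$ alone tells you nothing about $A$ or $C$. Your proposed remedy passes through projectives, but $DS_x$ annihilates every projective in $\F_n$: each $P_n(\mu)$ is a direct summand of a module induced from $\p(n)_{\bar0}$ and hence is free over $U(\langle x\rangle)\cong\C[x]/(x^2)$, so $DS_x(P_n(\mu))=0$. (This also follows from Lemma \ref{lem:transl_into_typical} plus Corollary \ref{cor:DS_translation_commute}: every projective is $\Theta_J$ of a typical projective, and $DS_x$ kills the typical block.) Thus ``$DS_x(P_n(\mu))\in\F_{n-2}^k$'' is vacuously true and carries no information. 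Applying the six-term sequence to $0\to\operatorname{rad}P_n(\lambda)\to P_n(\lambda)\to L_n(\lambda)\to0$ then yields only $DS_x(L_n(\lambda))\cong\Pi\, DS_x(\operatorname{rad}P_n(\lambda))$, and $\operatorname{rad}P_n(\lambda)$ has as composition factors other simples of the same block (possibly including $L_n(\lambda)$ itself), whose $DS_x$ is exactly what you are trying to determine. There is no well-founded induction here: composition lengths of projective covers in an atypical block are unbounded, and the recursion from $L_n(\lambda)$ to the factors of $\operatorname{rad}P_n(\lambda)$ does not decrease any usable invariant. You also have not handled the case $k=\pm n$ for arbitrary $M$ (you only get it for $M=\Theta_I\triv_n$); this requires a separate argument.

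The paper instead argues by contradiction, using translation to a typical block as a ``detector.'' Suppose $L'\in\F_{n-2}^l$ is a composition factor of $DS_x(M)$ with $M\in\F_n^k$ and $l<k$. Choose (Lemma \ref{lem:transl_into_typical}) an integer sequence $I$ with $\Theta_I^{(n-2)}L'\neq0$ landing in the typical block $\F_{n-2}^{n-2}$; then $t(I)=(n-2-l)/2$. Exactness of $\Theta_I$ makes $\Theta_I^{(n-2)}DS_x(M)$ have a nonzero summand in that typical block, while Corollary \ref{cor:DS_translation_commute} identifies it with $DS_x\bigl(\Theta_I^{(n)}M\bigr)$, and $\Theta_I^{(n)}M$ lies in $\F_n^{k+2t(I)}=\F_n^{n+(k-l)-2}$, a block of $\F_n$ that is either empty or typical. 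In either case $DS_x$ kills it: one first proves the $k=\pm n$ case directly by observing that every object of a typical block is free over $U(\p(n)_{-1})$ (simples there are thin Kac modules, and a finite filtration by free modules over the local ring $U(\p(n)_{-1})$ is free). This contradiction, and its mirror for $l>k$, gives the result. This translation-to-typical mechanism is the key one-directional input that your six-term approach lacks.
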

\begin{proof}
The proposition is proved in several steps.
 \begin{enumerate}
  \item\label{itm:blocks1} We prove that $DS_x$ commutes with translation functors. This is done in Lemma \ref{cor:DS_translation_commute}.
  \item\label{itm:blocks2} We prove that it is possible to translate any simple module into any typical block; that is, for any simple $L \in \F_n$, 
  \begin{itemize}
   \item There exists an integer sequence $I =(i_1, i_2, \ldots, i_s)$ such that $$\Theta^{(n)}_I L \in \F_{n}^n \;\text{ and } \; \Theta^{(n)}_I L \neq 0.$$ 
   \item There exists an integer sequence$I =(i_1, i_2, \ldots, i_s)$ such that $$\Theta^{(n)}_I L \in \F_{-n}^n \;\text{ and } \; \Theta^{(n)}_I L \neq 0.$$ 
  \end{itemize}
This is proved in Lemma \ref{lem:transl_into_typical}.

  \item\label{itm:blocks3} We prove \VeraA{the statement in the case} $k= \pm n$. In that case, we need to show that $DS_x(M) =0$ for any $M \in \F^{\pm n}_n$. 
  Recall from \cite[Remark 9.1.3]{BDE:periplectic} that any simple module in such a block is also costandard (that is, a thin Kac module in the terminology of \cite{BDE:periplectic}). Hence it is a free $U(\mathfrak{p}(n)_{-1})$-module. Any $\p(n)$-module in $\F^{\pm n}_n$ has a finite filtration with simple subquotients, hence is a free $U(\mathfrak{p}(n)_{-1})$-module. This implies that $DS_x(M)=0$ for any $M \in \F^{\pm n}_n$.
\item Consider a simple module $L \in \F^k_n$, and a simple subquotient $L'$ of $DS_x(L)$. Let $l$ be such that $L' \in \F_{n-2}^l$.

We will show that $k=l$.

Assume $l < k$. Recall that $k \equiv l \, (\mathrm{mod} \, 2)$, as explained in Section \ref{sssec:blocks}.

By \eqref{itm:blocks2}, there exists an integer sequence $I :=(i_1, i_2, \ldots, i_s)$ such that the translation functor $$\Theta^{(n-2)}_I :=\Theta^{(n-2)}_{i_1} \circ \Theta^{(n-2)}_{i_2} \circ \ldots \circ \Theta^{(n-2)}_{i_s}$$ on $\F_{n-2}$ satisfies: $$\Theta^{(n-2)}_I DS_x L' \neq 0 \; \text{ and } DS_x L' \in \F_{n-2}^{n-2}.$$ 

Furthermore, by Corollary \ref{old_cor:blocks_action}, we have: $t(I)=(n-2-l)/2 $. 

Let $$ \Theta^{(n)}_I :=\Theta^{(n)}_{i_1} \circ \Theta^{(n)}_{i_2} \circ \ldots \circ \Theta^{(n)}_{i_s}$$ be the corresponding translation functor on $\F_{n}$.

By \eqref{itm:blocks1}, we have an isomorphism $$\Theta^{(n-2)}_I DS_x L \cong DS_x \Theta^{(n)}_I L.$$ 

By our construction, this object has a non-zero direct summand in the typical block $\F_{n-2}^{n-2}$.

Let us show that $\Theta^{(n)}_I L \in \F_n^n$. Indeed, we may apply Corollary \ref{old_thrm:blocks_action} to get: $$\Theta^{(n)}_I L = \Theta^{(n)}_{i_1} \circ \Theta^{(n)}_{i_2} \circ \ldots \circ \Theta^{(n)}_{i_s} L \in \F_n^{k+2t(I)}$$ We already computed that $t(I)=(n-2-l)/2$ hence $\F_n^{k+2t(I)} = \F_n^{n+(k-l)-2}$. Recall that $k>l$ and they have the same parity, so $n+(k-l)-2 \geq n$. Now, if $n+(k-l)-2 >n$, then $\F_n^{n+(k-l)-2} = 0$, so we can just say that $\Theta^{(n)}_I L \in \F_n^n$.

This implies that $DS_x \Theta^{(n)}_I L=0$ (by \eqref{itm:blocks3}), and hence cannot have a non-zero direct summand in the typical block $\F_{n-2}^{n-2}$. Thus we obtained a contradiction.

A similar proof shows that we cannot have $l >k$: in that case, we to translate to typical block $\F_{n-2}^{-(n-2)}$.

 \end{enumerate}
\end{proof}

\begin{lemma}\label{lem:transl_into_typical}
 For any simple module $L \in \F_n$, 
 \begin{enumerate}
  \item\label{itm:typical1} There exists a composition of translation functors $\Theta_I$ where $I = (i_1 , i_2,\ldots, i_s)$ is an integer sequence, such that $\Theta^{(n)}_I L \neq 0$ sits in the typical block $\F_n^{n}$.
  \item\label{itm:typical2} There exists a composition of translation functors $\Theta_I$ where $I = (i_1 , i_2,\ldots, i_s)$ is an integer sequence, such that $\Theta^{(n)}_I L \neq 0$ sits in the typical block $\F_n^{-n}$.
 \end{enumerate}

\end{lemma}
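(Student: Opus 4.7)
The plan is to prove \eqref{itm:typical1} by induction on a combinatorial measure of distance from $\F_n^n$; part \eqref{itm:typical2} then follows by the symmetric argument with the roles of even and odd integer positions interchanged.

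For a simple module $L = L_n(\lambda)$ I define its \emph{defect} $d(\lambda)$ to be the number of indices $i$ for which $\bar{\lambda}_i$ is odd; equivalently, the number of black balls of $d_\lambda$ sitting at odd positions. Since each odd-position ball contributes $-1$ and each even-position ball contributes $+1$ to $\kappa(\lambda) = \sum_i (-1)^{\bar{\lambda}_i}$, one has $\kappa(\lambda) = n - 2d(\lambda)$, and $L \in \F_n^n$ precisely when $d(\lambda)=0$. The base case of the induction is thus trivial, taking the empty sequence $I$.

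For the inductive step, assume $d(\lambda) > 0$. The goal is to produce a single $\Theta^{(n)}_i$ (or a short word in translation functors) that is non-zero on $L$ and whose image has a simple constituent $L_n(\mu)$ with $d(\mu) < d(\lambda)$. Then the inductive hypothesis, applied to $L_n(\mu)$, supplies a further sequence $I'$ with $\Theta^{(n)}_{I'} L_n(\mu) \neq 0$ in $\F_n^n$; by exactness of translation functors this nonzero module is a subquotient of $\Theta^{(n)}_{I'} \Theta^{(n)}_i L$, giving the required $I$ for $L$. The existence of the inductive step is obtained from the weight-diagram description of translation functors on simples in \cite{BDE:periplectic}: up to a parity shift $\Theta^{(n)}_i$ corresponds to swapping the labels at positions $i$ and $i+1$ of $d_\lambda$, is non-zero iff this swap actually changes the diagram, and its composition factors are indexed by the resulting diagrams. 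If some odd position $j$ carrying a black ball has an empty even neighbour $j\pm 1$, the corresponding odd-indexed $\Theta^{(n)}_i$ moves that ball to an even position and decreases the defect by one.

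The main obstacle is the \emph{packed} configuration, in which every odd-position black ball of $d_\lambda$ is sandwiched between two even-position black balls. Here no single $\Theta^{(n)}_i$ reduces the defect directly. To handle this case I would first apply a preparatory translation that shifts an outermost even-position ball outward (the occupied region of $d_\lambda$ is finite, so such room always exists at the boundary), and then propagate the resulting vacancy inward by a chain of further translations until it reaches a slot adjacent to an odd-position ball, at which point the defect-reducing move becomes available. Each intermediate step must be verified, using the weight-diagram rules from \cite{BDE:periplectic}, both to be non-vanishing on the relevant simple and to leave a composition factor of the expected defect. Formalising this shifting procedure is where most of the work lies.
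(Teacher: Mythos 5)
Your approach is genuinely different from the paper's. You propose a direct combinatorial induction on a ``defect'' statistic $d(\lambda)$ (number of black balls at odd positions), using the weight-diagram description of $\Theta_i$ on \emph{simples} to reduce the defect one step at a time. The paper instead works entirely with \emph{projectives} and an adjunction: fix a typical weight $\mu$ with $\mu_i < \lambda_i$ and $\mu_i \in 2\Z$ so that $P' = P_n(\mu) \in \F_n^n$; use the result of \cite[Section 7.2]{BDE:periplectic} to obtain a sequence $J$ with $\Theta^{(n)}_J P' = P_n(\lambda)$; set $I = (j_s-1,\ldots,j_1-1)$ so that $\Theta^{(n)}_J \vdash \Theta^{(n)}_I$; and then compute $\Hom(P', \Theta^{(n)}_I L) = \Hom(\Theta^{(n)}_J P', L) = \Hom(P_n(\lambda), L) = \C$, which forces $\Theta^{(n)}_I L$ to be a nonzero quotient of $P'$ and hence in $\F_n^n$. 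The paper's route is short precisely because Theorem \ref{old_thrm:transl_proj} and \cite[Section 7.2]{BDE:periplectic} give complete control of $\Theta_i$ on \emph{projectives}; no analogous clean combinatorial rule is established in the paper for the action on simples.

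This is also where your argument has real gaps. You assert that $\Theta_i L(\lambda)$ is nonzero iff the $(i,i+1)$-swap changes $d_\lambda$, and that its composition factors are indexed by the resulting diagram; neither claim is justified from what the paper provides, and the second is not even the right kind of statement (after a nonzero translation the module lies in a single block, but which simples occur, and whether the module is nonzero at all, is not read off from a single swap). The non-vanishing of $\Theta_i$ on a \emph{simple} is genuinely delicate --- that is exactly why the paper detours through projectives and adjunction, where the $\Hom$ computation gives non-vanishing for free. Moreover, you explicitly leave the ``packed'' configuration (every odd-position ball flanked by even-position balls) unresolved, and this is not a corner case: it includes, for example, highest weights whose diagram is a single contiguous run of black balls. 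As written, the proposal is a plausible sketch of a different proof strategy, but it is not a proof: the central combinatorial facts it relies on are neither proved nor cited, and the case you flag as ``where most of the work lies'' is the one that actually needs an argument.

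If you want to keep the combinatorial flavour, a repair is to run the same induction but at the level of projectives: by Theorem \ref{old_thrm:transl_proj}, $\Theta_i P$ is indecomposable projective or zero, and \cite[Section 7.2]{BDE:periplectic} gives the precise diagram rule, so your shifting procedure is rigorous there. Then transfer back to simples exactly as the paper does, via the adjunction $\Theta_k \vdash \Theta_{k-1}$ and the identity $\Hom(P', \Theta_I L) = \Hom(\Theta_J P', L)$. That essentially reproduces the paper's proof, which suggests the detour through projectives is not an artifact but the natural path.
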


\begin{proof}
 We use the results of \cite{BDE:periplectic} on the action of translation functors. In particular, we use the description action of translation functors on projective modules given in \cite[Section 7.2]{BDE:periplectic} as well as the adjuction $\Theta^{(n)}_i \vdash \Theta^{(n)}_{i-1}$ for any $i \in \Z$. 
 
 We first prove \eqref{itm:typical1}.
 
 Let $\lambda$ be the highest weight of $L$ (hence $L = L_n (\lambda)$), let $P = P_n(\lambda)$ be the projective cover of $L$.
 
Fix a typical weight $\mu$ of $\p(n)$ with $\mu_i <\lambda_i$ and $\mu_i \in 2\Z$ for all $i$. Such a weight clearly exists: take for example $\mu_n \in \Z $ such that $\mu_n- n \in 2\Z$ and $ \mu_n < \lambda_1$. Set $\mu_k := \mu_n - (n-k)$ for any $k= 1, 2, \ldots, n-1$. Then $\mu_k < \mu_n < \lambda_1 < \lambda_k$ for any $k = 1, \ldots, n$, and $$\mu_k + k \equiv \mu_n - n \, (\mathrm{mod} \, 2)$$ which implies $\mu_k + k \in 2\Z$. Hence $\mu$ is a typical weight, and $P' := P_n(\mu)$ sits in $\F_n^{n}$. 

By \cite[Section 7.2]{BDE:periplectic}, we have an integer sequence 
$J = (j_1 , j_2,\ldots, j_s)$ such that $\Theta^{(n)}_J P' = P$.

Set $I = (j_s -1 , j_{s-1} -1,\ldots,j_2-1,  j_1-1)$. Then $\Theta^{(n)}_J \vdash \Theta^{(n)}_I$, and we have:
\VeraA{ 
  $$\Hom_{\p(n)}(P', \Theta^{(n)}_I L)= \Hom_{\p(n)}(\Theta_J^{(n)}P', L) =\Hom_{\p(n)}(P,L)=\C.$$
Therefore   $\Theta^{(n)}_I L$ is a non-trivial quotient of $P'$. This proves \eqref{itm:typical1}.}

Similarly, we prove \eqref{itm:typical2}. Fix a typical weight $\mu$ of $\p(n)$ with $\mu_i <\lambda_i$ and $\mu_i \in 2\Z+1$ for all $i$. Again, such a weight can be constructed very explicitly. Then $P' := P_n(\mu)$ sits in $\F_n^{-n}$, and we can apply exact the same arguments as before. 

\end{proof}

\section{Dual modules and blocks}

\begin{proposition}
 Let $M \in \F_n^k$. Then $M^*$ also lies in block $\F_n^k$.
\end{proposition}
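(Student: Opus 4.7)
The plan is to reduce to simple modules by exactness, and then run an induction on $n$ using the Duflo--Serganova functor together with Proposition \ref{prop:DS_blocks}, with small $n$ and typical blocks handled as base cases.

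Since $(-)^*$ is exact and contravariant and each $\F_n^k$ is a Serre subcategory of $\F_n$, the claim for general $M$ reduces to the case of simple modules. The base cases $n = 1, 2$ can be handled directly from the explicit description of the blocks: for $n=1$ only the typical blocks $\F_1^{\pm 1}$ occur, and for $n=2$ the extra atypical block $\F_2^0$ is checked on each of its simples by hand.

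For the inductive step $n \geq 3$, fix an odd $x \in \p(n)_{-1}$ with $[x,x]=0$ and $\rk(x)=2$. The functor $DS_x:\F_n \to \F_{n-2}$ is symmetric monoidal, and since $\F_n$ is rigid, $DS_x$ commutes with duality: $DS_x(M^*) \cong DS_x(M)^*$. Let $M \in \F_n^k$ and suppose $M^* \in \F_n^j$; by Proposition \ref{prop:DS_blocks}, $DS_x(M) \in \F_{n-2}^k$ and $DS_x(M^*) \in \F_{n-2}^j$. Applying the inductive hypothesis to $DS_x(M) \in \F_{n-2}^k$ yields $DS_x(M^*) = DS_x(M)^* \in \F_{n-2}^k$. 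Provided $DS_x(M^*) \neq 0$, this forces $j = k$. The vanishing case $DS_x(M^*) = 0$ occurs, again by Proposition \ref{prop:DS_blocks}, only when $M^* \in \F_n^{\pm n}$; the identity $DS_x(M) = DS_x(M^*)^*$ then forces $DS_x(M) = 0$ as well, so $M$ also lies in a typical block and $k = \pm n$. Thus the only situation not covered by the induction is when both $M$ and $M^*$ are concentrated in typical blocks. For this residual case, every simple in $\F_n^{\pm n}$ is a thin Kac module, i.e.\ a costandard that coincides with its standard (see \cite{BDE:periplectic} and the proof of Theorem \ref{thrm:KW}); a direct computation on the explicit construction of these thin Kac modules shows that the duality functor sends a thin Kac module with highest weight $\lambda$ to a thin Kac module whose highest weight $\mu$ satisfies $\kappa(\mu) = \kappa(\lambda)$. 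In particular each of $\F_n^n, \F_n^{-n}$ is preserved (not swapped) under duality, completing the induction.

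The main obstacle is the typical-block analysis, specifically ruling out a possible swap $\F_n^n \leftrightarrow \F_n^{-n}$ under $(-)^*$. This reduces to a combinatorial computation of the highest weight of the dual of a thin Kac module, with care required for the parity shifts coming from the odd form $\beta$; this is the only place where periplectic-specific weight combinatorics enters. The rest of the argument is a clean induction driven by the block-preservation of $DS_x$ from Proposition \ref{prop:DS_blocks}.
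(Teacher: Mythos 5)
Your overall strategy — reduce to simples, then induct on $n$ through $DS_x$ — is different from the paper's, which gives a direct, induction-free argument. Unfortunately, as written, your induction has a genuine gap.

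The gap is in the vanishing case. You write that ``$DS_x(M^*)=0$ occurs, again by Proposition \ref{prop:DS_blocks}, only when $M^*\in\F_n^{\pm n}$.'' But Proposition \ref{prop:DS_blocks} is a one-way statement: it says that if $M\in\F_n^{\pm n}$ then $DS_x(M)=0$, and that otherwise the composition factors of $DS_x(M)$ (if any) lie in $\F_{n-2}^k$. It does not say that $DS_x$ is nonzero on simple modules in the non-typical blocks. To run your induction you need exactly this converse — that $DS_x(L)\neq 0$ for every simple $L$ in an atypical block — and that is a nontrivial non-vanishing statement about $DS_x$ for a rank-$2$ element of $\p(n)_{-1}$ (note that $\p(n)_{-1}\cong \Pi\wedge^2(\C^n)^*$ has no rank-$1$ elements, so one cannot dodge the problem by shrinking the rank). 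Nothing in the paper establishes this, and it cannot be waved away: if $DS_x(M)=DS_x(M^*)=0$ for some atypical simple $M$, your inductive step says nothing at all about the block of $M^*$.

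There is also a structural point worth noticing: your ``residual case'' (ruling out a swap $\F_n^n\leftrightarrow\F_n^{-n}$ under $(-)^*$) defers to ``a direct computation of the highest weight of the dual of a thin Kac module.'' That computation is precisely what the paper carries out, and it applies to \emph{every} block, not just the typical ones: for any $\lambda$, one has $\nabla(\lambda)^*\cong\nabla(\mu)$ with $\bar\mu=-w_0\bar\lambda$, so $d_\mu$ is the reflection of $d_\lambda$ about $0$, and hence $\kappa(\mu)=\sum_i(-1)^{\bar\mu_i}=\sum_i(-1)^{-\bar\lambda_{n+1-i}}=\kappa(\lambda)$. Since $\nabla(\lambda)$ is indecomposable with socle $L(\lambda)$ and $\nabla(\lambda)^*$ is indecomposable with cosocle $L(\lambda)^*$, this already places $L(\lambda)^*$ in $\F_n^{\kappa(\lambda)}$, which is the whole proposition. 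So once you are willing to do the thin-Kac/weight-diagram computation you need for the residual case, the entire $DS_x$ scaffolding becomes unnecessary — and it was the $DS_x$ scaffolding that introduced the unproved non-vanishing assumption in the first place. I'd recommend dropping the induction and proving the proposition directly via the weight-diagram reflection.
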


\begin{proof}
 First of all, notice that it is enough to prove the statement for a simple module $M = L(\lambda)  \in \F_n^k$. 
 
 Consider the costandard module $\nabla(\lambda)$ having $L(\lambda)$ as its socle. This module is indecomposable, so $ \nabla(\lambda)\in \F_n^k$. Consider the dual module $\nabla(\lambda)^*$. This is also an indecomposable costandard module, with cosocle $L(\lambda)^*$, so it is enough to check that $\nabla(\lambda)^*\in \F_n^k$ as well. Now, by \cite[Lemma 3.6.1]{BDE:periplectic}, $\nabla(\lambda)^* \cong \nabla(\mu)$, where $\mu + \rho = -w_0(\lam + \rho)$, where $w_0$ is the longest element in the Weyl group. That is, $d_{\mu}$ is obtained from $d_{\lam}$ by reflecting the diagram with respect to zero. 
 
 Hence $\kappa(\lam) = \kappa(\mu)$, and so $L(\mu) \in \F_n^k$.
\end{proof}

\comment{\Inna{In fact, if we denote by $\lam^*$ the highest weight of $L(\lam)^*$, we have: the beginning positions of consecutive black sequences (``black clusters'') in $d_{\lam}$ and $d_{\lam^*}$ are obtained from one another by reflecting with respect to zero. The number of such clusters in both cases is the same, due to the lemma below $+$ $\Theta_i L(\lam) \neq 0$ only if $i$ is the beginning of a black cluster, I believe  }}

\begin{proposition}
 There exists a natural isomorphism $$ \Pi\Theta^{(n)}_{-k} (-)^* \stackrel{\sim}{\longrightarrow}  \left(\Theta^{(n)}_{k} (-)\right)^* .$$ 
\end{proposition}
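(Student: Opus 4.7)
My plan is to construct the isomorphism explicitly by tracking how the tensor Casimir $\Omega^{(n)}$ transforms under dualization.

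First, using the non-degenerate odd symmetric form $\beta: V_n \otimes V_n \to \C$ (which yields $V_n \cong \Pi V_n^*$ in $\F_n$), combined with the canonical $(M \otimes V_n)^* \cong V_n^* \otimes M^* \cong M^* \otimes V_n^*$ in the symmetric monoidal category $\F_n$, I obtain a natural isomorphism
$$\phi_M : (M \otimes V_n)^* \;\stackrel{\sim}{\longrightarrow}\; \Pi(M^* \otimes V_n).$$

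The central technical step is to show that under $\phi_M$, the transposed Casimir $(\Omega^{(n)}_M)^*$ acting on $(M \otimes V_n)^*$ corresponds to $-\Omega^{(n)}_{M^*}$ acting on $M^* \otimes V_n$. I would prove this by unwinding Definition \ref{def:Casimir} and dualizing each of the four morphisms in the composition. The ingredients are: the trace-induced pairing $\p(n)^* \otimes \p(n) \to \C$ is supersymmetric; the embedding $i_*: \p(n)^* \hookrightarrow \gl(V_n)$ is compatible with $\beta$-duality on $V_n$; and the dual of the action map $\p(n) \otimes M \to M$ is minus the transposed action, this minus sign being responsible for producing $-\Omega^{(n)}_{M^*}$ rather than $+\Omega^{(n)}_{M^*}$. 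One must also verify that no additive constant $c \cdot \id$ survives the evaluation/coevaluation round-trip.

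Given this, dualization via $\phi_M$ sends the generalized $k$-eigenspace of $\Omega^{(n)}_M$ on $M \otimes V_n$ (i.e.\ ${\Theta'}^{(n)}_k M$) to the generalized $(-k)$-eigenspace of $\Omega^{(n)}_{M^*}$ on $M^* \otimes V_n$ (i.e.\ ${\Theta'}^{(n)}_{-k} M^*$), up to a parity shift: $({\Theta'}^{(n)}_k M)^* \cong \Pi({\Theta'}^{(n)}_{-k} M^*)$. Using $\Theta^{(n)}_k = \Pi^k {\Theta'}^{(n)}_k$ and $\Pi^k = \Pi^{-k}$ one computes
$$(\Theta^{(n)}_k M)^* = \Pi^k ({\Theta'}^{(n)}_k M)^* \;\cong\; \Pi^{k+1}({\Theta'}^{(n)}_{-k} M^*) \;=\; \Pi \cdot \Pi^{-k}{\Theta'}^{(n)}_{-k}(M^*) \;=\; \Pi\,\Theta^{(n)}_{-k}(M^*),$$
and naturality in $M$ is automatic since $\phi_M$, the Casimir, and the parity shifts are all natural. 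As a consistency check, the block indexing matches: if $M \in \F_n^l$ then $(\Theta_k M)^*$ and $\Pi\Theta_{-k}(M^*)$ both lie in $\F_n^{l+2(-1)^{k+1}}$ (using the previous proposition that $(-)^*$ preserves blocks, and Corollary \ref{old_cor:blocks_action}).

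The main obstacle is the sign/shift computation for the dual Casimir: verifying that $(\Omega^{(n)}_M)^* \circ \phi_M = \phi_M \circ (-\Omega^{(n)}_{M^*})$ exactly, with no spurious additive constant. This requires careful bookkeeping in the super tensor category, especially signs arising from the super-braiding $\p(n) \otimes \p(n)^* \to \p(n)^* \otimes \p(n)$, from dualizing the action of odd elements of $\p(n)$ on $M$, and from the identification $V_n^* \cong \Pi V_n$ itself; these three sources of signs must combine to yield precisely $-\Omega^{(n)}_{M^*}$.
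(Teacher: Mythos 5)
Your approach is essentially the same as the paper's: conjugate the tensor Casimir by duality, show $(\Omega_M^{(n)})^*$ corresponds to $-\Omega_{M^*}^{(n)}$ under the identification $(M\otimes V_n)^*\cong\Pi(M^*\otimes V_n)$, and then shuffle eigenspaces and parity shifts. Your eigenvalue/parity bookkeeping at the end is correct and matches the paper's.

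However, there is a real gap: the single nontrivial step --- the sign computation $(\Omega_M^{(n)})^*\circ\phi_M=\phi_M\circ(-\Omega_{M^*}^{(n)})$ --- is only asserted, not proved, and your heuristic for \emph{where} the minus sign comes from is not quite right. You attribute it to ``the dual of the action map $\p(n)\otimes M\to M$ is minus the transposed action.'' That sign applies to the $\p(n)$-factor $X_i|_M$ and the $\gl(V_n)$-factor $X^i|_{V_n}$ symmetrically, and those two contributions cancel (or more precisely get absorbed by the braiding signs when one carefully unwinds the dual of a two-tensor operator). The sign that actually survives in the paper's proof comes from the specific fact that $\p(n)^*\subset\gl(V_n)$ is the $(-1)$-eigenspace of the involution induced by the odd form $\beta$: conjugating an element $X^i\in\p(n)^*$ by $\eta:V_n^*\to V_n\otimes\Pi\C$ produces $-X^i$ (see the commutative diagram in the paper's proof, citing \cite[Proof of Proposition 4.4.1]{BDE:periplectic}), while conjugating an element of $\p(n)$ by $\eta$ produces $+$ itself. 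Without isolating that $\p(n)$-versus-$\p(n)^*$ asymmetry, your planned bookkeeping does not actually force the sign, and indeed the two sources you list in the wrong proportion could just as well be argued to give $+\Omega_{M^*}$. You would also need to make your ``no additive constant'' remark precise; the paper's explicit dual-basis expansion $\Omega_M=\sum_i X_i|_M\otimes X^i|_{V_n}$ makes this a non-issue automatically, but an abstract argument should say why. So: right strategy, correct high-level structure, but the crux of the proof is missing and the stated sign heuristic would lead you astray if you tried to complete it.
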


\begin{proof}

Consider the functor $- \otimes V_n:\F_n \to \F_n$, $M \mapsto M \otimes V_n$. We have natural isomorphisms
$$M^* \otimes V_n \otimes \Pi\C \xrightarrow{\id \otimes \eta^{-1}} M^* \otimes V_n^* \xrightarrow{\sigma_{M^*, V_n^*}} V_n^* \otimes M^*\cong (M \otimes V_n)^*$$ where $\eta: V_n^* \to V_n \otimes \Pi\C$ is the isomorphism defined by the odd bilinear form on $V_n$. 

Consider the tensor Casimir $\Omega_M: M \otimes V_n \to M \otimes V_n$ as in Definition \ref{def:Casimir}. Choosing dual bases $\{X_i\}_i, \{X^i\}_i$ in $\p(n)$ and $\p(n)^* \subset \gl(n|n)$, we can write $\Omega_M = \sum_i X_i\rvert_M \otimes X^i\rvert_{V_n}$.

Denote by $$\Omega^*_M: V_n^* \otimes M^* \to V_n^* \otimes M^*$$ the dual map. Then for any homogeneous $u \in V_n^*, f \in M^*$, we have:$$\Omega_M^*(u \otimes f) = \sum_i (-1)^{p(X_i)p(X^i)}(-1)^{p(X^i)p(f)}\left(X^i\rvert_{V_n} \right)^*(u) \otimes \left(X_i\rvert_M \right)^*(f).$$

We now construct the commutative diagram $$ \xymatrix{&{V_n^* \otimes M^*} \ar[rr]^{\Omega_M^*} &{} &{ V_n^* \otimes M^*} \ar[d]^{\sigma_{M^*, V_n^*}^{-1}}\\ &{M^* \otimes V_n^*} \ar[u]^{\sigma_{M^*, V_n^*}} \ar[rr] &{} &{M^* \otimes V_n^*} \ar[d]^{\id \otimes \eta} \\ &{M^* \otimes V_n \otimes \Pi\C} \ar[u]^{\id \otimes \eta^{-1}} \ar[rr] &{}&{M^* \otimes V_n \otimes \Pi\C}}$$
 and we compute the lower two horizontal arrows.
 
 We begin with the horizontal arrow $\phi: M^* \otimes V_n^* \to M^* \otimes V_n^*$. 
 
 By definition, $\phi =\sigma_{M^*, V_n^*}^{-1} \circ \Omega_M^* \circ \sigma_{M^*, V_n^*}$.

 For any homogeneous $f \in M^*, u \in V_n^*$, applying the map $\Omega_M^* \circ \sigma_{M^*, V_n^*}$ to the element $f \otimes u$ we get:
 \begin{align*}
  &\Omega_M^*\left((-1)^{p(f)p(u)} u \otimes f\right) =\\&=\sum_i (-1)^{p(f)p(u)} (-1)^{p(X_i)p(X^i)}(-1)^{p(X^i)p(u)+1} (-1)^{{p(X^i)p(f)}} X^i.u \otimes (-1)^{p(X^i)p(f)+1} X_i.f =\\&=\sum_i (-1)^{p(X_i)p(X^i)+p(f) p(u)+{p(X^i)p(u)}} X^i.u \otimes X_i.f
 \end{align*}

 Hence 
 \begin{align*}
  \phi(f \otimes u) &= \sum_i (-1)^{p(X_i)p(X^i)+p(f) p(u)+{p(X_i)p(u)}} (-1)^{(p(X_i)+p(f))(p(X^i)+p(u))} X_i.f \otimes X^i.u =\\&= \sum_i (-1)^{{p(X^i)p(f)}} X_i.f \otimes X^i.u  
  \end{align*}
 
 Next, we compute the horizontal arrow $\phi': M^* \otimes V_n \otimes \Pi\C \to M^* \otimes V_n \otimes \Pi\C$.
 
The elements $X^i \in \gl(n|n)$ satisfy the following property (cf. \cite[Proof of Proposition 4.4.1]{BDE:periplectic}):
 $$ \xymatrix{&V_n^* \ar[r]^{X^i} \ar[d]^{\eta} &V_n^* \ar[d]^{\eta} \\ &{V_n \otimes \Pi \C} \ar[r]_{-X^i} &{V_n \otimes \Pi \C}}$$
 
Given homogeneous $f \in M^*, v \in V_n \otimes \Pi\C$, we have $$\phi'(f \otimes v) = -(-1)^{{p(X^i)p(f)}} X_i.f \otimes X^i.v$$

Hence $\phi'=-\Pi\Omega_{M^*}$. Thus the natural isomorphism $\sigma_{M*, V_n^*} \circ (\id \otimes \eta^{-1})$ establishes a natural isomorphism between the eigenspace of $\Omega_M^*$ corresponding to eigenvalue $k$ and the eigenspace (shifted by $\Pi$) of $\Omega_{M^*}=-\Pi\phi'$ corresponding to eigenvalue $(-k)$. This implies the statement of the proposition.
\end{proof}
\begin{example}
 Let $n\geq 2$ and set $M=V_n$. Then $V_n^* \cong \Pi V_n$ and $\Theta_{-1}V_n \cong \wedge^2 V_n$ and hence $$\left(\Theta_{-1}V_n\right)^* \cong S^2 V_n \cong \Pi \Theta_1 (\Pi V_n) \cong \Pi \Theta_1 V_n^*.$$
\end{example}

\end{document}